\documentclass[12pt,numbers]{elsarticle}
\journal{}

\makeatletter
\def\ps@pprintTitle{%
 \let\@oddhead\@empty
 \let\@evenhead\@empty
 \def\@oddfoot{\hfill\thepage}%
 \let\@evenfoot\@oddfoot}
\makeatother

\usepackage{amsmath,amssymb,amsfonts,amsthm,graphicx}
\usepackage[bookmarksnumbered,colorlinks=true]{hyperref}
\usepackage[labelfont=bf]{caption}

\providecommand{\doi}[1]{\href{https://doi.org/#1}{DOI:#1}}
\usepackage{xurl} 
\renewcommand{\doi}[1]{%
 \href{https://doi.org/#1}{\nolinkurl{DOI:#1}}%
}

\usepackage{dsfont} 
\usepackage{enumitem} 
\usepackage{mathtools} 
\usepackage{appendix} 
\usepackage{geometry} 
\numberwithin{equation}{section}
\numberwithin{table}{section}
\numberwithin{figure}{section}

\theoremstyle{plain}
\newtheorem{theorem}{Theorem}[section]

\newtheorem{lemma}[theorem]{Lemma}
\newtheorem{corollary}[theorem]{Corollary}

\theoremstyle{definition}
\newtheorem{remark}{Remark}[section]

\newcommand{\Z}{\mathbb{Z}}

\newcommand{\R}{\mathbb{R}}
\newcommand{\PP}{\mathsf{P}} 
\newcommand{\EE}{\mathsf{E}} 
\newcommand{\bb}[1]{\boldsymbol{#1}}
\newcommand{\rd}{\mathrm{d}}

\newcommand{\leqdef}{\vcentcolon=}

\newcommand{\Med}{\operatorname{Med}}
\newcommand{\GIG}{\operatorname{GIG}}

\allowdisplaybreaks

\begin{document}

\begin{frontmatter}

\title{Bounds for the median of the generalized hyperbolic and related distributions \\[-1mm] {\small \itshape Dedicated to the memory of Milan Merkle}\vspace{-1mm}}

\author[a2]{Robert E.~Gaunt\corref{mycorrespondingauthor}}
\author[a3]{Fr\'ed\'eric Ouimet}

\address[a2]{The University of Manchester, Manchester, M13 9PL, UK}
\address[a3]{Universit\'e du Qu\'ebec \`a Trois-Rivi\`eres, Trois-Rivi\`eres, QC G8Z 4M3, Canada\vspace{-5mm}}

\cortext[mycorrespondingauthor]{Corresponding author. Email address: robert.gaunt@manchester.ac.uk}

\begin{abstract}
We prove monotonicity properties for medians of gamma sums and differences of the form $Z_{\alpha} = \alpha X_1 + (2 - \alpha)X_2$, where $X_1$ and $X_2$ are independent gamma random variables with common shape parameter. By combining these monotonicity properties with known bounds for the median of the gamma distribution, we establish sharp bounds for the median of the variance-gamma and McKay Type I distributions. Also, by exploiting the normal variance-mean mixture representation of the generalized hyperbolic distribution together with bounds for a ratio of modified Bessel functions of the second kind, we obtain sharp bounds for the median of the generalized hyperbolic distribution. We thus resolve all five conjectures of Gaunt and Merkle \cite{MR4141494}. As a by-product of our analysis, we show that the variance-gamma distributions with positive asymmetry parameter and the McKay Type I distributions satisfy the ``mode-median-mean'' inequality for all admissible parameter values, and that the same is true of the generalized hyperbolic distribution with positive asymmetry parameter.
\end{abstract}

\begin{keyword} 
gamma distribution, generalized hyperbolic distribution, inequality, McKay Type I distribution, mode-median-mean inequality, median, variance-gamma distribution
\MSC[2020]{Primary: 60E05, 60E15; Secondary: 26D15, 26A48, 33C10, 62E15}
\end{keyword}

\end{frontmatter}

\section{Introduction}\label{sec:intro}

\subsection{The generalized hyperbolic, variance-gamma and McKay Type I distributions}

Introduced in 1977 by Barndorff-Nielsen \cite{b77}, the generalized hyperbolic (GH) distribution is widely used across the mathematical sciences in areas such as modelling of the distribution of particle size from aeolian sand deposits \cite{b77,bb85}, modelling of turbulence \cite{bjs89} and is of particular interest in financial modelling \cite{bs03,ek95,ep02,mfe05}. For parameters $\lambda\in\R$, $\alpha \geq 0$, $\beta\in\R$, $|\beta| \leq \alpha$, $\delta \geq 0$ and $\mu\in\R$ satisfying the admissibility conditions below, the GH distribution, denoted by $\mathrm{GH}(\lambda,\alpha,\beta,\delta,\mu)$, has density
\begin{equation}\label{ghpdf}
p_{\mathrm{GH}}(x) = \frac{(\gamma/\delta)^{\lambda}}{\sqrt{2\pi}K_{\lambda}(\delta\gamma)}e^{\beta(x-\mu)}\frac{K_{\lambda-1/2}(\alpha\sqrt{\delta^2 + (x-\mu)^2})}{(\sqrt{\delta^2 + (x-\mu)^2}/\alpha)^{1/2-\lambda}}, \qquad x\in\R,
\end{equation}
where $\gamma = \sqrt{\alpha^2-\beta^2}$ and $K_{\lambda}$ is the modified Bessel function of the second kind (see \cite[Chap.~10]{NISTDLMF}). As noted by \cite{eh04}, the three permitted parameter regimes are
\[
\mathrm{(i)} ~\delta \geq 0, \,\gamma > 0, \,\lambda > 0, \qquad \mathrm{(ii)} ~\delta > 0, \,\gamma > 0, \,\lambda = 0, \qquad \mathrm{(iii)} ~\delta > 0, \,\gamma \geq 0, \,\lambda < 0.
\]
If $\delta = 0$ or $\gamma = 0$, then the density \eqref{ghpdf} is defined as the limit obtained by using the limiting form $K_{\lambda}(x)\sim 2^{|\lambda|-1} \Gamma(|\lambda|)x^{-|\lambda|}$, $x\downarrow0$, $\lambda\not = 0$ (see \cite[Eqs.\ 10.27.3, 10.30.2]{NISTDLMF}).

The GH distribution enjoys the following normal variance-mean mixture representation \cite{bks82}. Let $N\sim \mathcal{N}(0,1)$ be independent of the generalized inverse Gaussian random variable $W\sim\GIG(\lambda,\delta,\gamma)$ with density
\[
p_{\mathrm{GIG}}(x) = \frac{(\gamma/\delta)^{\lambda}}{2K_{\lambda}(\delta\gamma)}x^{\lambda-1}\exp\left(-\frac{\delta^2/x + \gamma^2x}{2}\right), \qquad x > 0;
\]
for $\delta = 0$ and $\lambda > 0$, this notation is understood in the limiting sense and $W\sim \Gamma(\lambda,\gamma^2/2)$ (with the parameterization of the gamma distribution as in \eqref{gampdf} below). For $\gamma = 0$ and $\lambda < 0$, this notation is also understood in the limiting sense, and $W$ has density
\[
\frac{(\delta^2/2)^{-\lambda}}{\Gamma(-\lambda)} x^{\lambda-1} \exp\left(-\frac{\delta^2}{2x}\right), \qquad x > 0.
\]
Then
\begin{equation}\label{eq:definitions.GH.mixture}
\mu + \beta W + \sqrt{W} N \sim \mathrm{GH}(\lambda,\alpha,\beta,\delta,\mu).
\end{equation}
The normal variance-mean mixture representation \eqref{eq:definitions.GH.mixture} can be useful when deriving distributional properties, as conditional on $W$ the GH random variable $\mu + \beta W + \sqrt{W} N$ is normally distributed; we will exploit this property of the GH distribution in this paper.

The GH distribution with $\delta = 0$ corresponds to the variance-gamma (VG) distribution; further special and limiting cases of the GH distribution include the hyperbolic, normal-inverse Gaussian and Student's $t$-distributions \cite{bs03,eh04}. In this paper, it will be convenient to work with the following parameterization of the VG distribution in which $r = 2\lambda$, $\theta = \beta/\gamma^2$ and $\sigma = 1/\gamma$; for this and other parameterizations see \cite{FischerGauntSarantsev2025VGReview}. Under this parameterization, the VG distribution, which we denote by $\mathrm{VG}(r,\theta,\sigma,\mu)$, has density
\[
p_{\mathrm{VG}}(x) = \frac{1}{\sigma\sqrt{\pi} \Gamma(r/2)} e^{\theta (x-\mu)/\sigma^2} \bigg(\frac{|x-\mu|}{2\sqrt{\theta^2 + \sigma^2}}\bigg)^{(r-1)/2} K_{\frac{r-1}{2}}\bigg(\frac{\sqrt{\theta^2 + \sigma^2}}{\sigma^2} |x-\mu| \bigg), \qquad x\in\R.
\]
Like the GH distribution, the VG distribution has been widely used in financial modelling \cite{mcc98,ms90} and also possesses an attractive distributional theory, with several important distributions arising as special and limiting cases, including the normal, gamma and asymmetric Laplace distributions and the product of two zero mean normal random variables \cite{FischerGauntSarantsev2025VGReview}. The VG distribution is also referred to as the generalized Laplace distribution \cite{kkp01}, the Bessel function distribution \cite{m32} and the McKay Type II distribution \cite{HolmAlouini2004}. In this paper, we shall let $\mu = 0$ throughout, as results in the general case follow from the basic relations $\mathrm{GH}(\lambda,\alpha,\beta,\delta,\mu) \, \smash{\stackrel{d}{=}} \, \mu + \mathrm{GH}(\lambda,\alpha,\beta,\delta,0)$ and $\mathrm{VG}(r,\theta,\sigma,\mu) \, \smash{\stackrel{d}{=}} \, \mu + \mathrm{VG}(r,\theta,\sigma,0)$. We shall also suppose that $\beta > 0$ and $\theta > 0$, since $\mathrm{GH}(\lambda,\alpha,\beta,\delta,0) \, \smash{\stackrel{d}{=}} \, -\mathrm{GH}(\lambda,\alpha,-\beta,\delta,0)$ and $\mathrm{VG}(r,\theta,\sigma,0) \, \smash{\stackrel{d}{=}} \, -\mathrm{VG}(r,-\theta,\sigma,0)$.

The McKay Type I distribution \cite{m32} is defined similarly to the VG distribution but with the role of the modified Bessel function of the second kind $K_{\lambda}$ replaced by the modified Bessel function of the first kind $I_{\lambda}$ and the support of the distribution restricted to the positive real line.
For $b > 0$, $c > 1$, $m > -1/2$, the McKay Type I distribution has density
\begin{equation}\label{mden}
p_{\mathrm{McKay} \, \mathrm{I}}(x) = \frac{\sqrt{\pi}(c^2-1)^{m + 1/2}}{2^m b^{m + 1} \Gamma(m + 1/2)}x^m e^{-cx/b}I_m\bigg(\frac{x}{b}\bigg), \qquad x > 0.
\end{equation}
Distributional theory for the McKay Type I distribution was developed by \cite{HolmAlouini2004}, motivated by an application to wireless communication systems, and further recent contributions to the distributional theory of the McKay Type I distribution include those of \cite{b26,pog1,pog2}.

\subsection{Conjectured bounds for the median}

The presence of the modified Bessel functions in the densities of the GH, VG and McKay Type I distributions means that explicit formulas for basic distributional summaries can be difficult to obtain. Whilst closed-form expressions for the moments of the GH, VG and McKay Type I distributions are known (see \cite{bs03}, \cite{FischerGauntSarantsev2025VGReview} and \cite{HolmAlouini2004}, respectively), exact formulas for the mode and median are not available for general parameter constellations, with notable exceptions being that the mode and median of the GH and VG distributions are clearly given by the location parameter $\mu$ when the asymmetry parameters $\beta$ and $\theta$ are zero. Since one cannot hope for closed-form formulas for the mode and median of the GH, VG and McKay Type I distributions for general parameter constellations, \cite{MR4141494} sought to derive tight two-sided bounds for the mode and median of these distributions, and succeeded in achieving such bounds for the mode. However, attaining bounds for the median proved to be more challenging, so the authors contented themselves with conjecturing tight bounds for the medians of these distributions.

The conjectured bounds for the median of the VG and McKay Type I distributions were based on representations of these distributions in terms of differences and sums of independent gamma random variables, which we now recall. For $r > 0$ and $\lambda > 0$, we write $G\sim \Gamma(r,\lambda)$ if $G$ has density
\begin{equation}\label{gampdf}
p_G(x) = \frac{\lambda^r}{ \Gamma(r)}x^{r-1}e^{-\lambda x}, \qquad x > 0.
\end{equation}
As noted by \cite[Eq.\ (3.43)]{MR4141494}, it is almost immediate from \cite[Proposition 1.2, part (vi)]{g14} that a VG distributed random variable $V_{r,\theta,\sigma}\sim\mathrm{VG}(r,\theta,\sigma,0)$ satisfies the distributional relation
\begin{equation}\label{eq:definitions.VG.gamma.representation}
V_{r,\theta,\sigma} \stackrel{d}{=} \theta\left[\left(\sqrt{1 + \kappa} + 1\right)Y_1 - \left(\sqrt{1 + \kappa} - 1\right)Y_2\right], \qquad \kappa \leqdef \frac{\sigma^2}{\theta^2},
\end{equation}
where $Y_1,Y_2$ are independent $\Gamma(r/2,1)$ random variables. Similarly, as observed by \cite[Eq.\ (3.44)]{MR4141494}, an application of \cite[Theorem 3]{HolmAlouini2004} reveals that a McKay Type I random variable $Z_{m,c,\phi}$ with density \eqref{mden} enjoys the gamma-sum representation
\begin{equation}\label{eq:definitions.McKay.gamma.representation}
Z_{m,c,\phi} \stackrel{d}{=} \frac{\phi(c - 1)}{c}Y_1 + \frac{\phi(c + 1)}{c}Y_2, \qquad \phi \leqdef \frac{bc}{c^2-1},
\end{equation}
where $Y_1,Y_2$ are independent $\Gamma(m + 1/2,1)$ random variables. As noted by \cite{MR4141494}, from the distributional relations \eqref{eq:definitions.VG.gamma.representation}--\eqref{eq:definitions.McKay.gamma.representation} and basic properties of the gamma distribution, $V_{r,\theta,\sigma} \, \smash{\stackrel{d}{\to}} \, \Gamma(r/2,(2\theta)^{-1})$ as $\sigma\downarrow0$, whilst $Z_{m,c,\phi} \, \smash{\stackrel{d}{\to}} \, \Gamma(m + 1/2,(2\phi)^{-1})$ as $c\downarrow1$, and $Z_{m,c,\phi} \, \smash{\stackrel{d}{\to}} \, \Gamma(2m + 1,1/\phi)$ as $c\to\infty$.

The first two conjectures of \cite{MR4141494} concerned monotonicity of the median of the random variable
\begin{equation}\label{eq:definitions.Z.alpha}
Z_{\alpha} \leqdef \alpha X_1 + (2 - \alpha)X_2, \qquad \alpha\in\R,
\end{equation}
where $X_1$ and $X_2$ are independent $\Gamma(r,1)$ random variables. For a real-valued random variable $X$ with cumulative distribution function (cdf) $F_X(x) = \PP(X \leq x)$, $x\in\R$, the median of $X$ is defined by $\Med(X) \leqdef \inf\{x\in\R:F_X(x) \geq 1/2\}$. If $X$ has a continuous strictly positive density, then $\Med(X)$ is the unique real number $m$ such that $\PP(X \leq m) = 1/2$. Conjecture~3.2 of \cite{MR4141494} asserts that the function $\alpha\mapsto\Med(Z_{\alpha})$ is non-decreasing on $(0,1)$, whilst Conjecture 3.3 asserts that $\alpha\mapsto\Med(Z_{\alpha})$ is non-increasing on $(2,\infty)$. Note that, by exchangeability of $X_1$ and $X_2$, it would follow that $\alpha\mapsto\Med(Z_{\alpha})$ is non-increasing on $(1,2)$ if Conjecture 3.2 were true.

Since the limiting gamma cdfs are continuous and strictly increasing at their medians, their quantile functions are continuous at $1/2$. Therefore, the convergence of quantiles under weak convergence \cite[Lemma~21.2]{vdv98} shows that a resolution of Conjectures 3.2 and 3.3 would lead to the two-sided inequalities
\begin{align}\label{vineq}
\lim_{\sigma\to\infty}\Med(V_{r,\theta,\sigma}) < \Med(V_{r,\theta,\sigma}) < \lim_{\sigma\downarrow0}\Med(V_{r,\theta,\sigma}) = \Med(G),
\end{align}
where $G\sim \Gamma(r/2,(2\theta)^{-1})$ with $\theta > 0$, and
\begin{align}\label{mineq}
\Med(G_1) = \lim_{c\downarrow1}\Med(Z_{m,c,\phi}) < \Med(Z_{m,c,\phi}) < \lim_{c\to\infty}\Med(Z_{m,c,\phi}) = \Med(G_2),
\end{align}
where $G_1\sim \Gamma(m + 1/2,(2\phi)^{-1})$ and $G_2\sim \Gamma(2m + 1,1/\phi)$. By appealing to the extensive literature on tight bounds for the median of the gamma distribution (see, for example, \cite{alm03,a05,bp06,cr86,MR1195477,Lyon2021,Lyon2023}) it would then be possible to deduce from inequalities \eqref{vineq} and \eqref{mineq} a tight two-sided inequality for the median of the McKay Type I distribution and a tight upper bound for the median of the VG distributions; it still remains to compute the limit $\lim_{\sigma\to\infty}\Med(V_{r,\theta,\sigma})$ in order to obtain a lower bound for the median of the VG distribution. The corresponding conjectured two-sided bounds for the medians of the VG and McKay Type I distributions were given in Conjectures 3.5 and 3.6 of \cite{MR4141494}, with the former conjecture also conjecturing that $\lim_{\sigma\to\infty}\Med(V_{r,\theta,\sigma}) = 0\vee (r-1)\theta$.

For general parameter values, the GH distribution does not enjoy a representation of the form \eqref{eq:definitions.Z.alpha}. However, by considering the conjectured lower bound for the median of the VG distribution together with the functional form of their bounds for the mode of the GH distribution, \cite{MR4141494} conjectured that, for $X\sim \mathrm{GH}(\lambda,\alpha,\beta,\delta,0)$ with $\lambda > 1/2$ and $\beta > 0$,
\[
\Med(X) > \frac{\beta}{\gamma^2}\Big[\lambda-1/2 + \sqrt{(\lambda-1/2)^2 + \delta^2\gamma^2}\Big].
\]

\subsection{Summary of our contributions and outline of the paper}

The principal contribution of this paper is the complete resolution of all five conjectures posed in \cite{MR4141494}. In Theorems \ref{prop:conj3.2} and \ref{prop:conj3.3}, we prove Conjectures 3.2 and 3.3 of \cite{MR4141494}, and we in fact prove the stronger results that the function $\alpha\mapsto\Med(Z_{\alpha})$ is strictly increasing on $(0,1)$ (and thus strictly decreasing on $(1,2)$), and strictly decreasing on $(2,\infty)$.
In proving Theorem \ref{prop:conj3.2}, we follow the proof strategy suggested by \cite[pp.\ 15]{MR4141494}, in which a connection was made between Schur convexity and their Conjecture 3.2 that reduced proving the conjecture to proving that $\Med(Z_{\alpha})\in[0,2r]$.

We prove that $\Med(Z_{\alpha})\in[0,2r]$ by making use of an alternative representation of $Z_{\alpha}$ in terms of independent gamma and beta random variables, via the so-called ``beta-gamma algebra.'' With $S \leqdef X_1 + X_2$ and $P \leqdef X_1/S$, the random variables $S$ and $P$ are independent, $S\sim \Gamma(2r,1)$ and $P\sim\operatorname{Beta}(r,r)$ (see \cite[Eq.\ (4)]{d98}). In particular, $V \leqdef 1 - 2P$ is symmetric about the origin with density $h_r(v) = 2^{1-2r}(1 - v^2)^{r-1}/B(r,r)$, $-1 < v < 1$, where $B(\cdot,\cdot)$ denotes the beta function. Thus, for $0 < \alpha < 1$,
\begin{equation}\label{eq:definitions.positive.gamma.representation}
Z_{\alpha} \stackrel{d}{=} S(1 + (1 - \alpha)V).
\end{equation}
With the representation \eqref{eq:definitions.positive.gamma.representation} at hand, probabilities involving $Z_{\alpha}$ can be calculated by conditioning on $V$, and we use this approach to convert the problem of proving the inequality $\PP(Z_{\alpha} \leq 2r) > 1/2$, $0 < \alpha < 1$, into proving a certain inequality involving the cdf of the gamma distribution, which is given by Lemma \ref{lem:gamma.reflection.bound}. We are able to prove that $\alpha\mapsto\Med(Z_{\alpha})$ is strictly increasing on $(0,1)$ (not just non-decreasing on $(0,1)$) by upgrading the Schur-convexity Lemma 3.4 of \cite{MR4141494} to include strict Schur-convexity in Lemma \ref{lem:Schur.gamma}.

We use a different approach to proving Theorem \ref{prop:conj3.3} that does not involve appealing to results on Schur-convexity, although central to our proof is an alternative representation of $Z_{\alpha}$ as a normal variance-mean mixture with gamma mixing distribution. With this representation, we can express the cdf of $Z_{\alpha}$ as an integral involving the standard normal cdf and the gamma density, and via a direct but delicate analysis of the cdf of $Z_{\alpha}$ we are able to establish that $\alpha\mapsto\Med(Z_{\alpha})$ is strictly decreasing on $(2,\infty)$.

With Theorems \ref{prop:conj3.2} and \ref{prop:conj3.3} proved, sharp bounds for the median of the McKay Type I distribution are readily deduced via \eqref{mineq} and known tight bounds for the median of the gamma distribution (see Corollary \ref{cor:conj3.6}), whilst a sharp upper bound for the median of the VG distribution follows similarly from the upper bound in \eqref{vineq} (see Corollary \ref{cor:conj3.5}). The most involved part of the proof of Corollary \ref{cor:conj3.5} is the calculation of the limit $\lim_{\sigma\to\infty}\Med(V_{r,\theta,\sigma})$, for which we again employ the normal variance-mean mixture representation of the VG distribution and a delicate analysis of the cdf of the VG distribution.

In Theorem \ref{prop:conj3.7}, we derive the lower bound for the GH median presented in \cite[Conjecture 3.7]{MR4141494}, and we further extend the range of validity of the bound beyond that stated in the conjecture. The key to the proof is Lemma \ref{lem:GH.comparison}, in which we make use of the normal variance-mean mixture representation \eqref{eq:definitions.GH.mixture} of the GH distribution. By a delicate analysis of the cdf of the GH distribution, via its normal variance-mean mixture representation, and by appealing to known bounds for a ratio of modified Bessel functions of the second kind we are able to establish sharp conditions under which certain probabilities involving a GH distributed random variable are increasing or decreasing. With Lemma \ref{lem:GH.comparison} proven, we readily deduce the desired lower bound for the GH median. The same approach can be used to derive upper bounds for the median of the GH distribution, and we present a family of such bounds in Theorem \ref{prop:GH.upper.bounds} and prove that the constants in these bounds are best possible.

Our bounds for the median of the VG, McKay Type I and GH distributions complement the bounds for the mode of these distributions that were derived by \cite{MR4141494}, and when combined with the exact formulas for the means of these distributions, one can deduce mode-median-mean inequalities for these distributions, which we present in Corollary \ref{mmm}. Whilst general methods exist for proving that the mode, median and mean of positively skewed unimodal distributions satisfy this ordering (see, for example, \cite{gm77,v78}), we were not able to apply standard sufficient conditions in our setting, and so our median bounds proved essential in allowing us to attain the mode-median-mean inequalities of Corollary \ref{mmm}.

The rest of the paper is organized as follows. In Section~\ref{sec:results}, we state our main results, the monotonicity results for medians of gamma sums and gamma differences, their VG and McKay Type I consequences, and the GH median bounds. In this section, we also present our mode-median-mean inequalities for the VG, McKay Type I and GH distributions. In Section \ref{subsec:proof.auxiliary}, we state and prove three auxiliary lemmas, which we apply in Section~\ref{sec:proofs} to prove our main results.

\section{Main results}\label{sec:results}

We begin by resolving the gamma-sum and gamma-difference monotonicity conjectures of \cite[Conjectures~3.2 and 3.3]{MR4141494}. We state the monotonicity result for the gamma-sum given in Theorem \ref{prop:conj3.2} on the full interval $(0,2)$, using the symmetry $Z_{2 - \alpha} \, \smash{\stackrel{d}{=}} \, Z_{\alpha}$ to include the mirror range $1 < \alpha < 2$.

\begin{theorem}[Conjecture~3.2 of \cite{MR4141494}]\label{prop:conj3.2}
Let $X_1$ and $X_2$ be independent $\Gamma(r,1)$ random variables, where $r > 0$, and let $Z_{\alpha}$ be defined by \eqref{eq:definitions.Z.alpha}. Then the function
\[
\alpha\mapsto \Med(Z_{\alpha})
\]
is strictly increasing on $(0,1)$, strictly decreasing on $(1,2)$, and satisfies $\Med(Z_{\alpha}) = \Med(Z_{2 - \alpha})$ for $0 < \alpha < 2$.
\end{theorem}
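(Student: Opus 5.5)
The symmetry $\Med(Z_\alpha)=\Med(Z_{2-\alpha})$ is immediate: $(X_1,X_2)$ is exchangeable, so $Z_{2-\alpha}\stackrel{d}{=}Z_\alpha$. It therefore suffices to prove strict increase on $(0,1)$, and strict decrease on $(1,2)$ follows by reflection. I will follow the Schur-convexity strategy indicated in the introduction. For fixed $t>0$, consider the map $(a_1,a_2)\mapsto \PP(a_1X_1+a_2X_2\le t)$ on $\{a_1,a_2>0,\ a_1+a_2=2\}$. The strict form of the Schur-convexity lemma of \cite{MR4141494} (the upgrade announced as Lemma \ref{lem:Schur.gamma}) should say the following. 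For $t\le 2r$ (the mean of $Z_\alpha$), the map $\alpha\mapsto F_\alpha(t):=\PP(Z_\alpha\le t)$ is strictly decreasing on $(0,1)$: $(\alpha,2-\alpha)$ becomes less spread as $\alpha\uparrow1$, so the probability is Schur-convex there. For $t\ge 2r$ the monotonicity is reversed. I would take that lemma as given.

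The monotonicity of the median then follows once I know that $m_\alpha:=\Med(Z_\alpha)$ lies in $(0,2r)$ for every $\alpha\in(0,1)$. Positivity is trivial, since $Z_\alpha>0$. Take $0<\alpha<\alpha'<1$. Since $Z_\alpha$ has a continuous positive density on $(0,\infty)$, $F_\alpha(m_\alpha)=1/2$. By strict Schur-convexity at $t=m_\alpha\le 2r$, we get $F_{\alpha'}(m_\alpha)<F_\alpha(m_\alpha)=1/2$. Hence $m_{\alpha'}>m_\alpha$, which is the desired strict increase.

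The main step, and the main obstacle, is proving $m_\alpha<2r$, that is, $\PP(Z_\alpha\le 2r)>1/2$ for $0<\alpha<1$. I would use the beta-gamma representation \eqref{eq:definitions.positive.gamma.representation}, $Z_\alpha\stackrel{d}{=}S(1+(1-\alpha)V)$, with $S\sim\Gamma(2r,1)$ independent of the symmetric variable $V$. Write $c=1-\alpha\in(0,1)$ and let $G$ be the $\Gamma(2r,1)$ cdf. Conditioning on $V$ and symmetrizing $V\leftrightarrow -V$ gives
\[
\PP(Z_\alpha\le 2r)=\int_0^1 \Big[G\Big(\tfrac{2r}{1+cv}\Big)+G\Big(\tfrac{2r}{1-cv}\Big)\Big]h_r(v)\,\rd v .
\]
Since $2\int_0^1h_r=1$, it suffices to show $G\big(\tfrac{2r}{1+u}\big)+G\big(\tfrac{2r}{1-u}\big)>1$ for $u\in(0,1)$. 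This is exactly the kind of gamma-cdf reflection inequality flagged as Lemma \ref{lem:gamma.reflection.bound}, which I would invoke.

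If I had to prove that lemma myself, I would differentiate $\phi(u)=G(2r/(1+u))+G(2r/(1-u))$ in $u$. At $u=0$, $\phi(0)=2G(2r)>1$, because the median of $\Gamma(2r,1)$ is less than its mean. As $u\to1$, $\phi(u)\to G(r)+1>1$. I would then show that $\phi$ has no interior dip below $1$, by comparing the density terms $g(2r/(1+u))/(1+u)^2$ and $g(2r/(1-u))/(1-u)^2$ and using the log-concavity of the gamma density around its mean. This is the delicate part.

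For small $r$ (below $1/2$), the density $h_r$ and the gamma density are not log-concave, so care is needed there. Everything else in the argument is routine.
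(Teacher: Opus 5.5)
Your proposal is correct and follows the paper's proof essentially step for step. Exchangeability gives the symmetry, the strict Schur-convexity comparison $\PP(Z_{\alpha'}\le t)<\PP(Z_{\alpha}\le t)$ for $0<\alpha<\alpha'<1$ and $0<t<2r$ is applied at a median lying in $(0,2r)$, and $\PP(Z_\alpha\le 2r)>1/2$ is obtained from $Z_\alpha\stackrel{d}{=}S(1+(1-\alpha)V)$ by symmetrizing in $V$ and applying exactly the reflection inequality of Lemma~\ref{lem:gamma.reflection.bound} with $k=2r$.

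Your small-$r$ caveat is unnecessary. Only the symmetry of $V$ is used, and the paper proves that lemma for every $k>0$ without any log-concavity. It shows that the log-ratio $(k+1)\log\frac{1+u}{1-u}-\frac{2ku}{1-u^2}$ of the two derivative terms vanishes at $u=0$, increases, and then decreases to $-\infty$. Hence $H'$ changes sign at most once, from positive to negative, and the minimum of $H$ is attained at the endpoints.
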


\begin{theorem}[Conjecture~3.3 of \cite{MR4141494}]\label{prop:conj3.3}
Let $X_1$ and $X_2$ be independent $\Gamma(r,1)$ random variables, where $r > 0$, and let $Z_{\alpha}$ be defined by \eqref{eq:definitions.Z.alpha}. Then the function
\[
\alpha\mapsto \Med(Z_{\alpha})
\]
is strictly decreasing on $(2,\infty)$.
\end{theorem}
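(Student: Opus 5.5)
We prove Theorem \ref{prop:conj3.3}, working with a normal variance-mean mixture representation of $Z_{\alpha}$ for $\alpha>2$. There $Z_\alpha = \alpha X_1 - (\alpha-2)X_2$ is a gamma difference. Matching it with \eqref{eq:definitions.VG.gamma.representation} gives $\theta = 2$, $\sqrt{1+\kappa}=\alpha-1$, so $\kappa = \alpha(\alpha-2)$ and $\sigma^2 = 4\alpha(\alpha-2)$, with $r$ replaced by $2r$. Hence $Z_\alpha \stackrel{d}{=} 2W + \sigma_\alpha\sqrt{W}N$, where $W\sim\Gamma(r,1)$ is independent of $N\sim\mathcal N(0,1)$ and $\sigma_\alpha^2 = 4\alpha(\alpha-2)$. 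This can be checked directly through the characteristic function: $\EE e^{itZ_\alpha} = (1-i\alpha t)^{-r}(1+i(\alpha-2)t)^{-r} = (1-2it+\alpha(\alpha-2)t^2)^{-r}$, which is exactly the mixture's transform.

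The parameter $s\leqdef\sigma_\alpha$ increases strictly from $0$ to $\infty$ as $\alpha$ runs over $(2,\infty)$. It therefore suffices to show that $s\mapsto m(s)\leqdef\Med(2W+s\sqrt W N)$ is strictly decreasing on $(0,\infty)$. Write
\[
F(x,s) = \PP(2W+s\sqrt W N\le x) = \int_0^\infty \Phi\Big(\frac{x-2w}{s\sqrt w}\Big)\frac{w^{r-1}e^{-w}}{\Gamma(r)}\,\rd w .
\]
Since $F$ is smooth with $\partial_x F>0$, the implicit function theorem gives $m'(s) = -\partial_sF/\partial_xF$ evaluated at $x=m(s)$. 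So I need to show $\partial_s F(m(s),s)>0$. Now
\[
\partial_sF(x,s) = -\frac{1}{s}\int_0^\infty \frac{x-2w}{s\sqrt w}\,\varphi\Big(\frac{x-2w}{s\sqrt w}\Big)p_W(w)\,\rd w ,
\]
so the goal becomes $\int_0^\infty (x-2w)w^{-1/2}\varphi(\cdot)\,p_W(w)\,\rd w<0$ at $x=m(s)$.

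Expanding the Gaussian, $\varphi\big(\frac{x-2w}{s\sqrt w}\big)\propto e^{2x/s^2}\exp\!\big(-\frac{x^2}{2s^2w}-\frac{2w}{s^2}\big)$. The integral therefore reduces to a combination of GIG-type integrals, i.e.\ modified Bessel functions $K_\nu$. Set $a=x^2/s^2$, $b=4/s^2+2$ and $c=\sqrt{ab}$, assuming $x>0$. Then $\int w^{\nu-1}e^{-a/(2w)-bw/2}\,\rd w = 2(a/b)^{\nu/2}K_\nu(c)$. The sign condition becomes
\[
x\,K_{r-1/2}(c) < 2\sqrt{a/b}\,K_{r+1/2}(c),
\]
that is, $\sqrt{b}\,s < 2K_{r+1/2}(c)/K_{r-1/2}(c)$ with $c = x\sqrt b/s$. (If $m(s)\le0$, the integrand $(x-2w)$ is negative everywhere and the claim is immediate.) Because $K_{r+1/2}(c)/K_{r-1/2}(c)$ is decreasing in $c$, it is equivalent to an upper bound $m(s)<x^*(s)$, where $x^*(s)$ solves the equality. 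In other words, I must show $F(x^*(s),s)>1/2$.

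This last step is the main obstacle: an a priori median bound sharp enough to beat the threshold defined via the Bessel ratio. I would first try the known bounds $K_{\nu+1}(c)/K_\nu(c) > \big(\nu+\tfrac12+\sqrt{(\nu+\tfrac12)^2+c^2}\big)/c$-type (Segura-type bounds). These replace $x^*$ by an explicit smaller $\tilde x(s)$, and then I would prove $F(\tilde x(s),s)\ge 1/2$. For that I would look for a comparison, for instance via the mean $\EE Z=2r$ and a mean-median type inequality for the mixture: conditionally on $W$, the median is $2W$, so $m(s)\le \Med$-bound via $\PP(Z\le x)\ge \EE\Phi(\cdot)$ with $\Phi$ linearized. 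Another route is to show $F(\tilde x(s),s)-1/2$ is monotone in $s$ and tends to a nonnegative limit as $s\downarrow0$ (where $m\to2\Med(W)$) and as $s\to\infty$. Checking the endpoint cases $s\to0$ (gamma median bounds $\Med(\Gamma(r,1))<r$) and $s\to\infty$ (limit $\max(0,2r-1)$) will guide which explicit bound suffices.
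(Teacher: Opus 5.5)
Your reduction is correct, but it stops where the real difficulty begins. You never prove $m(s)<x^*(s)$, which is the whole content of the theorem, and the tools you suggest cannot prove it.

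There is a harmless slip first. Matching $Z_\alpha=\alpha X_1-(\alpha-2)X_2$ with \eqref{eq:definitions.VG.gamma.representation} gives $\theta=1$, not $\theta=2$. Your own characteristic-function identity forces $s^2/2=\alpha(\alpha-2)$, so $s^2=2\alpha(\alpha-2)$. Only monotonicity of $s$ in $\alpha$ matters, so nothing breaks. Your Bessel computation is right: since $s\sqrt b=\sqrt{4+2s^2}$, the condition $\partial_sF(x,s)>0$ reads $K_{r+1/2}(c)/K_{r-1/2}(c)>R\leqdef\sqrt{1+s^2/2}$ with $c=2Rx/s^2$, i.e.\ $x<x^*(s)$.

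Why the suggested tools fail:
\begin{itemize}
\item The bound you quote, $K_{\nu+1}(c)/K_\nu(c)>(\nu+1/2+\sqrt{(\nu+1/2)^2+c^2})/c$, goes the wrong way for $\nu=r-1/2>-1/2$. By \eqref{eq:Bessel.ratio.half} it is an upper bound. Using it would give the threshold $2r$ and an invalid argument.
\item The correct-direction lower bound $(\nu+\sqrt{\nu^2+c^2})/c$ from \eqref{eq:Bessel.ratio.two.sided} gives, on solving $(\nu+\sqrt{\nu^2+c^2})/c=R$, exactly $\tilde x(s)=2\nu=2r-1$ for every $s>0$. When $r\le 1/2$ it gives no usable threshold at all.
\item But $m(s)<2r-1$ is false. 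For $r>1/2$, your formula with that lower bound gives $\partial_sF(2r-1,s)>0$ for all $s$. Also $F(2r-1,s)\to1/2$ as $s\to\infty$. Hence $F(2r-1,s)<1/2$, so $m(s)>2r-1$ (compare Corollary \ref{cor:conj3.5}).
\item Using $K_{\nu+1}(c)/K_\nu(c)\sim 2\nu/c$ as $c\downarrow0$, one finds $x^*(s)\to 2r-1$ as $s\to\infty$ for $r>1/2$. That is also the limit of $m(s)$. So there is no slack at that end, and the endpoint-checking or linearized-$\Phi$ ideas you list cannot supply the required comparison.
\end{itemize}

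The missing idea is to use the median equation itself rather than an explicit threshold. The paper writes $s^2=2\tau$ and $x=2q$, and pairs $w=qe^{-y}$ with $w=qe^{y}$. At these two points the argument of $\Phi$ takes the opposite values $\pm B_{\tau,q}(y)$, where $B_{\tau,q}(y)=2\sqrt{2q/\tau}\,\sinh(y/2)$. Set $A_{\tau,q}=\Phi(B_{\tau,q})-1/2$. This gives
\[
F_\tau(2q)-\tfrac12=\tfrac{2q^r}{\Gamma(r)}\int_0^\infty A_{\tau,q}L_q\,\rd y,\qquad
\partial_\tau F_\tau(2q)=-\tfrac{q^r}{\tau\Gamma(r)}\int_0^\infty \rho(B_{\tau,q})A_{\tau,q}L_q\,\rd y.
\]
Both integrals share the kernel $L_q(y)=e^{-q\cosh y}\sinh(q\sinh y-ry)$, and $\rho(x)=x\phi(x)/(\Phi(x)-1/2)$ is strictly decreasing.

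For $q<r$, $L_q$ changes sign exactly once, from negative to positive. So at $q=m/2$ the identity $\int_0^\infty A L\,\rd y=0$ forces $\int_0^\infty \rho(B)AL\,\rd y<0$: subtract $\rho(B(y_\tau))$ at the sign change $y_\tau$. This gives $\partial_\tau F>0$ at the median. The only a priori information needed is the crude bound $m<2r$. It comes from the same identity at $q=r$, where $L_r>0$.

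In your $w$-variable this Chebyshev-type comparison is not available. There $\rho$ of the argument is not monotone in $w$: it increases up to $w=x/2$ and then decreases. The multiplicative pairing $w\leftrightarrow q^2/w$ is what makes it monotone.
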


With the monotonicity results of Theorems \ref{prop:conj3.2} and \ref{prop:conj3.3} established, we are able to deduce bounds for the median of the VG and McKay Type I distributions via inequalities \eqref{vineq} and \eqref{mineq}, known bounds for the median of the gamma function and calculation of $\lim_{\sigma\to\infty}\Med(V_{r,\theta,\sigma})$. The calculation of $\lim_{\sigma\to\infty}\Med(V_{r,\theta,\sigma})$ is non-trivial and is the most involved part of the proof of the following Corollary \ref{cor:conj3.5}.

From the extensive literature on bounds for the median of the gamma distribution, we apply the following bounds.
Let $G\sim \Gamma(r,\lambda)$. Then, it was shown by \cite{bp06} that, for $r > 0$ and $\lambda > 0$,
\begin{equation}\label{twoside1} \frac{r-\log(2)}{\lambda} < \frac{r}{\lambda}e^{-\log (2)/r} < \Med(G) < \frac{r}{\lambda}e^{-1/(3r)} < \frac{1}{\lambda}\bigg(r-\frac{1}{3} + \frac{1}{18r}\bigg),
\end{equation}
whilst it was proven by \cite{MR4141494} that, for $r \geq 1$ and $\lambda > 0$,
\begin{equation}\label{twoside2}
\frac{r-1/3}{\lambda} < \Med(G) \leq \frac{r-1 + \log(2)}{\lambda},
\end{equation}
where we have equality in the upper bound in \eqref{twoside2} if and only if $r = 1$. Previously, the two-sided inequality \eqref{twoside2} had been established by \cite{MR1195477} for $r\in\Z^{+}$. Alternative tight bounds for the median of the gamma distribution are available in the literature (see, for example, \cite{Lyon2021,Lyon2023} and references therein), and applications of these bounds would lead to alternative bounds for the median of the VG and McKay Type I distributions that could improve on the bounds of Corollaries \ref{cor:conj3.5} and \ref{cor:conj3.6} in certain parameter regimes.

\begin{corollary}[Conjecture~3.5 of \cite{MR4141494}]\label{cor:conj3.5}
Let $V_{r,\theta,\sigma}\sim \mathrm{VG}(r,\theta,\sigma,0)$, where $r > 0$, $\theta > 0$ and $\sigma > 0$. Then the function
\begin{equation}\label{eq:cor:conj3.5.claim.1}
\sigma\mapsto \Med(V_{r,\theta,\sigma})
\end{equation}
is strictly decreasing on $(0,\infty)$. Consequently, if $G\sim \Gamma(r/2,(2\theta)^{-1})$, then
\begin{equation}\label{eq:cor:conj3.5.claim.2}
\Med(V_{r,\theta,\sigma}) < \Med(G), \qquad \sigma > 0.
\end{equation}
Moreover,
\begin{equation}\label{eq:asserted.Med}
\lim_{\sigma\to\infty}\Med(V_{r,\theta,\sigma}) =
\begin{cases}
(r - 1)\theta, & r > 1,\\
0, & 0 < r \leq 1.
\end{cases}
\end{equation}
Therefore, for $r > 0$,
\begin{equation}\label{eq:results.VG.bounds.general}
0\vee(r - 1)\theta < \Med(V_{r,\theta,\sigma}) < r\theta e^{-2/(3r)} < \left(r - \frac{2}{3} + \frac{2}{9r}\right)\theta,
\end{equation}
and, for $r \geq 2$,
\begin{equation}\label{eq:results.VG.bounds.refined}
\Med(V_{r,\theta,\sigma}) \leq (r + 2\log(2) - 2)\theta.
\end{equation}
\end{corollary}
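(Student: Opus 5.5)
The plan is to reduce everything to Theorem \ref{prop:conj3.3} through the gamma-difference representation \eqref{eq:definitions.VG.gamma.representation}. Write $s = \sqrt{1+\kappa}$, where $\kappa = \sigma^2/\theta^2$. Then
\[
V_{r,\theta,\sigma} \stackrel{d}{=} \theta\big[(s+1)Y_1 - (s-1)Y_2\big] = \frac{\theta(s-1)}{2-\alpha}\big[\alpha Y_1 + (2-\alpha)Y_2\big], \qquad \alpha = \frac{2(s+1)}{2}\cdot\frac{1}{1}\ \text{chosen so that}\ \frac{\alpha}{2-\alpha} = -\frac{s+1}{s-1}.
\]
This gives $\alpha = (s+1)/s > 1$, which is not the right range. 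I would therefore swap roles: with $\alpha = (s+1)$ and $2-\alpha = 1-s$, we get directly
\[
V_{r,\theta,\sigma} \stackrel{d}{=} \theta Z_{s+1}, \qquad Z_{s+1} = (s+1)Y_1 + (1-s)Y_2,
\]
where $Y_i \sim \Gamma(r/2,1)$. Since $\sigma \mapsto s+1$ is a strictly increasing bijection from $(0,\infty)$ onto $(2,\infty)$, and the median is equivariant under positive scaling, Theorem \ref{prop:conj3.3} (with shape $r/2$) immediately gives that \eqref{eq:cor:conj3.5.claim.1} is strictly decreasing. For \eqref{eq:cor:conj3.5.claim.2}, as $\sigma\downarrow0$ we have $V_{r,\theta,\sigma}\to 2\theta Y_1 \sim \Gamma(r/2,(2\theta)^{-1})$ in distribution, and its cdf is continuous and strictly increasing at the median. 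Convergence of quantiles \cite[Lemma~21.2]{vdv98} then gives $\lim_{\sigma\downarrow0}\Med(V_{r,\theta,\sigma}) = \Med(G)$, and strict monotonicity yields the strict inequality.

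The main obstacle is the limit \eqref{eq:asserted.Med}. I would use the normal variance-mean mixture $V_{r,\theta,\sigma} \stackrel{d}{=} \theta W + \sigma\sqrt{W}N$ with $W\sim\Gamma(r/2,1/2)$ independent of $N\sim\mathcal{N}(0,1)$. This is the $\delta=0$ case of \eqref{eq:definitions.GH.mixture}, rescaled to the $(r,\theta,\sigma)$ parameterization. Then
\[
\PP(V_{r,\theta,\sigma}\le x) = \EE\Big[\Phi\Big(\frac{x-\theta W}{\sigma\sqrt W}\Big)\Big].
\]
Fix $x$ and expand as $\sigma\to\infty$. Since $\Phi(u) = 1/2 + u\phi(0) + O(u^3)$, this gives
\[
\PP(V\le x) - \tfrac12 \approx \frac{1}{\sigma\sqrt{2\pi}}\,\EE\big[(x-\theta W)W^{-1/2}\big].
\]
This expectation is finite precisely when $r>1$, and it equals $x\,\EE[W^{-1/2}] - \theta\EE[W^{1/2}]$. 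Gamma moments give $\EE[W^{1/2}]/\EE[W^{-1/2}] = \sqrt2\,\Gamma((r+1)/2)/\Gamma(r/2)\cdot \Gamma(r/2)/(\Gamma((r-1)/2)/\sqrt2) = r-1$. So the sign of $\PP(V\le x)-1/2$ for large $\sigma$ is the sign of $x-(r-1)\theta$. The remainder must be controlled by dominated convergence: multiply by $\sigma$ and use $|\Phi(u)-1/2|\le |u|\phi(0)$ together with the integrability of $W^{-1/2}$ and $W^{1/2}$. Since the median is monotone in $\sigma$, its limit exists, and it must then equal $(r-1)\theta$.

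For $0<r\le1$ I expect the hardest part of the argument. The median is positive for all $\sigma$; I would verify this by showing $\PP(V\le0)<1/2$, i.e.\ $\EE[\Phi(-\theta\sqrt W/\sigma)]<1/2$, which is immediate. So the limit $L\ge0$ exists. To show $L=0$, fix $x>0$ and show $\PP(V\le x)>1/2$ for large $\sigma$. Split the expectation at $W=\varepsilon$. On $\{W<\varepsilon\}$ the argument $(x-\theta W)/(\sigma\sqrt W)$ is large when $W\ll x^2/\sigma^2$. This region contributes roughly $\int_0^{c/\sigma^2} w^{r/2-1}\,\rd w\asymp \sigma^{-r}$, compared with the $O(1/\sigma)$ negative drift term $-\theta\EE[\sqrt W]/\sigma$. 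The $\sigma^{-r}$ gain dominates when $r<1$; more precisely, a scaling $W=t/\sigma^2$ gives a positive contribution of order $\sigma^{-r}$. When $r=1$ both terms are of order $1/\sigma$, but the positive part carries a logarithmic factor $\log\sigma$ from $\EE[W^{-1/2}\mathbf 1\{W>c/\sigma^2\}]$, so it still wins. Finally, the bounds \eqref{eq:results.VG.bounds.general} and \eqref{eq:results.VG.bounds.refined} follow from the strict monotonicity in $\sigma$ (giving strict lower bounds by the limit) and from applying \eqref{twoside1} and \eqref{twoside2} to $G\sim\Gamma(r/2,(2\theta)^{-1})$, the latter requiring $r/2\ge1$.
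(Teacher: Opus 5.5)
Your proposal is correct. The monotonicity in $\sigma$, the limit $\sigma\downarrow0$ and the final bounds follow the paper exactly. Only your first display is garbled: solving $\alpha/(2-\alpha)=-(s+1)/(s-1)$ gives $\alpha=s+1$ directly, so the detour through $(s+1)/s$ should be deleted.

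Where you genuinely differ is the limit $\sigma\to\infty$. The paper recycles the symmetrized identity \eqref{eq:proof.conj3.5.pairing.identity} from the proof of Theorem~\ref{prop:conj3.3}, obtained by substituting $qe^{\mp y}$ for the gamma variable and pairing the two halves. For $r>1$ it applies dominated convergence to $\sqrt{\kappa}A_{\kappa,q}$ and evaluates the limit integral $I_{r/2}(q)$ in closed form. For $r\le1$ it balances an $O(\kappa^{-1/2})$ negative contribution from $(0,y_0)$ against a positive one. That positive term is of order $\kappa^{-r/2}$ from a window near $y=\log(\kappa)$, or of order $\kappa^{-1/2}\log(\kappa)$ from $[y_0,\frac12\log(\kappa)]$ when $r=1$.

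You instead work directly with $\EE[\Phi((x-\theta W)/(\sigma\sqrt W))]$, $W\sim\Gamma(r/2,1/2)$. The threshold appears as the moment ratio $\EE[W^{1/2}]/\EE[W^{-1/2}]=r-1$, and the change of behaviour at $r=1$ is visibly the non-integrability of $W^{-1/2}$. For $r\le1$, splitting at some $\varepsilon<x/\theta$ makes the integrand nonnegative on $\{W<\varepsilon\}$. The part on $\{W\ge\varepsilon\}$ is at least $-C/\sigma$ by $|\Phi(u)-1/2|\le\phi(0)|u|$. The region $\{W\lesssim\sigma^{-2}\}$ supplies $\gtrsim\sigma^{-r}$, and for $r=1$ the region $\{c\sigma^{-2}<W<\varepsilon\}$ supplies $\gtrsim\sigma^{-1}\log\sigma$.

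Your $W$ corresponds to twice the paper's gamma variable and $\kappa\asymp\sigma^{2}$. So the paper's window near $y=\log(\kappa)$ is precisely your region $W\asymp\sigma^{-2}$, and the competing orders are the same. Your bookkeeping is shorter, self-contained, and avoids the sign analysis of $L_q$. The paper's version has the advantage of reusing an identity it already needs for Theorem~\ref{prop:conj3.3}, so the whole argument lives in one framework.
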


\begin{corollary}[Conjecture~3.6 of \cite{MR4141494}]\label{cor:conj3.6}
Fix $m > -1/2$ and $\phi > 0$. For each $c > 1$, let $Z_{m,c,\phi}$ follow the McKay Type I distribution with density \eqref{mden}, where $b = \phi(c^2 - 1)/c$. Then the function
\[
c\mapsto \Med(Z_{m,c,\phi})
\]
is strictly increasing on $(1,\infty)$. Consequently, if $G_1\sim \Gamma(m + 1/2,(2\phi)^{-1})$ and $G_2\sim \Gamma(2m + 1,1/\phi)$, then
\[
\Med(G_1) < \Med(Z_{m,c,\phi}) < \Med(G_2), \qquad c > 1.
\]
Therefore, for $m > -1/2$,
\begin{equation}\label{eq:results.McKay.bounds.general}
\begin{aligned}
(2m + 1 - 2\log(2))\phi & < (2m + 1)\phi e^{-2\log(2)/(2m + 1)} \\[1mm]
& < \Med(Z_{m,c,\phi}) \\[-2mm]
& < (2m + 1)\phi e^{-1/(3(2m + 1))} < \left(2m + \frac{2}{3} + \frac{1}{18(2m + 1)}\right)\phi,
\end{aligned}
\end{equation}
and, for $m \geq 1/2$,
\begin{equation}\label{eq:results.McKay.bounds.refined}
(2m + 1/3)\phi < \Med(Z_{m,c,\phi}) \leq (2m + \log(2))\phi.
\end{equation}
\end{corollary}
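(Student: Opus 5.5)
We would prove Corollary \ref{cor:conj3.6} by reducing it to Theorem \ref{prop:conj3.2} through the gamma-sum representation \eqref{eq:definitions.McKay.gamma.representation}. With $r = m + 1/2 > 0$ and $Y_1,Y_2$ independent $\Gamma(r,1)$, we write
\[
Z_{m,c,\phi} \stackrel{d}{=} \frac{\phi}{c}\big[(c-1)Y_1 + (c+1)Y_2\big] = \phi\,\big[\alpha Y_1 + (2-\alpha)Y_2\big] = \phi Z_\alpha, \qquad \alpha \leqdef 1 - \frac{1}{c}.
\]
Here $\alpha Y_1 + (2-\alpha)Y_2$ uses $(c-1)/c = \alpha$ and $(c+1)/c = 2 - \alpha$. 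Since $\phi > 0$ is fixed and the median is equivariant under positive scaling, $\Med(Z_{m,c,\phi}) = \phi\,\Med(Z_{\alpha(c)})$. The map $c \mapsto \alpha(c) = 1 - 1/c$ is a strictly increasing bijection from $(1,\infty)$ onto $(0,1)$. Composing it with the strictly increasing map $\alpha \mapsto \Med(Z_\alpha)$ on $(0,1)$ from Theorem \ref{prop:conj3.2} gives that $c \mapsto \Med(Z_{m,c,\phi})$ is strictly increasing on $(1,\infty)$.

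For the two-sided comparison with $G_1$ and $G_2$, strict monotonicity gives, for any $c' < c < c''$,
\[
\Med(Z_{m,c',\phi}) < \Med(Z_{m,c,\phi}) < \Med(Z_{m,c'',\phi}).
\]
Letting $c' \downarrow 1$ and $c'' \to \infty$, the infimum and supremum over $c$ equal the two limits, so it suffices to identify them. As $\alpha \downarrow 0$ we have $Z_\alpha \to 2Y_2$ almost surely. As $\alpha \uparrow 1$ we have $Z_\alpha \to Y_1 + Y_2 \sim \Gamma(2r,1)$. Hence $\phi Z_\alpha \stackrel{d}{\to} G_1 \sim \Gamma(m+1/2,(2\phi)^{-1})$ as $c \downarrow 1$, and $\phi Z_\alpha \stackrel{d}{\to} G_2 \sim \Gamma(2m+1,1/\phi)$ as $c \to \infty$. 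The limiting gamma cdfs are continuous and strictly increasing, so \cite[Lemma~21.2]{vdv98} gives convergence of medians, as in \eqref{mineq}. The inequalities in the display stay strict because the limit of a strictly increasing function is approached strictly: $\Med(Z_{m,c,\phi}) > \Med(Z_{m,c',\phi}) \geq \lim_{c'\downarrow 1}\Med(Z_{m,c',\phi})$, and similarly on the upper side.

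Finally, we insert the gamma median bounds. Applying \eqref{twoside1} to $G_1$, with shape $r = m + 1/2$ and rate $\lambda = 1/(2\phi)$, gives
\[
\Med(G_1) > 2\phi(m+1/2)e^{-\log 2/(m+1/2)} = (2m+1)\phi e^{-2\log 2/(2m+1)},
\]
together with the weaker bound $(2m+1-2\log 2)\phi$. Applying \eqref{twoside1} to $G_2$, with shape $2m+1$ and rate $1/\phi$, gives the upper bounds $(2m+1)\phi e^{-1/(3(2m+1))}$ and $\phi(2m + 2/3 + 1/(18(2m+1)))$. Together these yield \eqref{eq:results.McKay.bounds.general}.

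For \eqref{eq:results.McKay.bounds.refined}, assume $m \geq 1/2$, so that $m + 1/2 \geq 1$ and $2m + 1 \geq 2$; then \eqref{twoside2} applies to both $G_1$ and $G_2$. It gives $\Med(G_1) > 2\phi(m + 1/2 - 1/3) = (2m + 1/3)\phi$ and $\Med(G_2) \leq \phi(2m + \log 2)$. Combining these with the strict inequalities already established, which are strict since $c > 1$, gives the result; the upper bound even holds strictly.

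No step is a real obstacle once Theorem \ref{prop:conj3.2} is available. The only points needing care are checking that the coefficients match $(\alpha, 2-\alpha)$ under the stated parametrization $b = \phi(c^2-1)/c$, which is consistent with $\phi = bc/(c^2-1)$, and keeping the bounds strict when passing to the limits.
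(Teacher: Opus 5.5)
Your proposal is correct and follows essentially the same route as the paper. You use the representation $Z_{m,c,\phi} \stackrel{d}{=} \phi Z_{\alpha_c}$ with $\alpha_c = (c-1)/c$, compose Theorem~\ref{prop:conj3.2} with the increasing bijection $c\mapsto\alpha_c$, identify the limits $c\downarrow 1$ and $c\to\infty$ via \cite[Lemma~21.2]{vdv98}, and then apply \eqref{twoside1} and \eqref{twoside2} to $G_1$ and $G_2$, with your explicit argument that strictness survives the limit passage filling in a step the paper leaves implicit.
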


Let $(X, Y)$ be a bivariate normal random vector with zero-mean vector, positive variances $(\sigma_X^2, \sigma_Y^2)$, and correlation coefficient $-1 < \rho < 1$. Since the work of \cite{craig,wb32}, the distribution of the product $Z = XY$ and, more generally, the sum of independent copies of such random variables, has received interest in the statistics literature and has been widely used in applications throughout the mathematical sciences; see \cite{g22,np16} for application areas and the former reference for basic distributional theory. Let $Z_1,\ldots,Z_n$ be independent copies of the product $Z$. Then it was shown by \cite{gaunt prod} that the sample mean $\overline{Z}_n = n^{-1}\sum_{i=1}^nZ_i$ is VG distributed, $\overline{Z}_n\sim\mathrm{VG}(n,\rho\sigma_X\sigma_Y/n,\sigma_X\sigma_Y\sqrt{1-\rho^2}/n,0)$. Thus, the following corollary is an immediate consequence of Corollary \ref{cor:conj3.5}.

\begin{corollary}Let $\overline{Z}_n$ be defined as above. Let $0 < \rho < 1$. Then, for $n \geq 1$,
\[
0 \vee \rho\sigma_X\sigma_Y\bigg(1-\frac{1}{n}\bigg) < \Med(\overline{Z}_n) < \rho\sigma_X\sigma_Y e^{-2/(3n)} < \rho\sigma_X\sigma_Y\left(1 - \frac{2}{3n} + \frac{2}{9n^2}\right),
\]
and, for $n \geq 2$,
\[
\Med(\overline{Z}_n) \leq \rho\sigma_X\sigma_Y \bigg(1 - \frac{2(1-\log(2))}{n}\bigg).
\]
\end{corollary}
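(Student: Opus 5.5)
The corollary follows from Corollary \ref{cor:conj3.5} once we identify the VG parameters and substitute them. By the result of \cite{gaunt prod} quoted above, $\overline{Z}_n\sim\mathrm{VG}(r,\theta,\sigma,0)$ with
\[
r = n,\qquad \theta = \rho\sigma_X\sigma_Y/n,\qquad \sigma = \sigma_X\sigma_Y\sqrt{1-\rho^2}/n .
\]
Since $0<\rho<1$ and the variances are positive, we have $\theta>0$, $\sigma>0$ and $r=n>0$. So every hypothesis of Corollary \ref{cor:conj3.5} holds, and nothing further needs to be proved about the distribution.

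\textbf{General bound.} Insert these parameters into \eqref{eq:results.VG.bounds.general}.
\begin{itemize}
\item Lower bound: $0\vee(r-1)\theta = 0\vee (n-1)\rho\sigma_X\sigma_Y/n = 0\vee \rho\sigma_X\sigma_Y(1-1/n)$.
\item First upper bound: $r\theta e^{-2/(3r)} = \rho\sigma_X\sigma_Y e^{-2/(3n)}$.
\item Polynomial upper bound: $(r-\tfrac23+\tfrac{2}{9r})\theta = \rho\sigma_X\sigma_Y(1-\tfrac{2}{3n}+\tfrac{2}{9n^2})$.
\end{itemize}
Here we used $r\theta=\rho\sigma_X\sigma_Y$ and $\theta/r=\rho\sigma_X\sigma_Y/n^2$. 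The strict inequalities carry over unchanged, and this holds for all $n\ge1$ since $r>0$ is the only requirement.

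\textbf{Refined bound for $n\ge2$.} The condition $n\ge 2$ is exactly $r\ge 2$, so \eqref{eq:results.VG.bounds.refined} applies. It gives
\[
\Med(\overline{Z}_n)\le (n+2\log 2-2)\rho\sigma_X\sigma_Y/n = \rho\sigma_X\sigma_Y\bigl(1-2(1-\log 2)/n\bigr),
\]
which is the claimed bound.

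There is no real obstacle: all the work sits in Corollary \ref{cor:conj3.5}. The only point needing care is the parameter identification: the VG parameterization of \cite{gaunt prod} must agree with the $(r,\theta,\sigma,\mu)$ parameterization used here. We take this agreement as given by the statement quoted before the corollary.
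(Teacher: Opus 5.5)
Your proposal is correct and is exactly the paper's argument: the paper presents this corollary as an immediate consequence of Corollary~\ref{cor:conj3.5} under the identification $r=n$, $\theta=\rho\sigma_X\sigma_Y/n$, $\sigma=\sigma_X\sigma_Y\sqrt{1-\rho^2}/n$. Your substitutions into \eqref{eq:results.VG.bounds.general} and \eqref{eq:results.VG.bounds.refined} are all correct, including the check that $0<\rho<1$ gives $\theta>0$ and $\sigma>0$.
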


Our next result proves the lower bound for the median of the GH distribution conjectured by \cite[Conjecture~3.7]{MR4141494}. The statement of the theorem further upgrades \cite[Conjecture~3.7]{MR4141494} by showing that, when $\delta > 0$, the restriction $\lambda > 1/2$ stated in the conjecture is not needed.

\begin{theorem}[Conjecture~3.7 of \cite{MR4141494}, with extension to all $\lambda\in\R$ for $\delta > 0$]\label{prop:conj3.7}
Let $X\sim \mathrm{GH}(\lambda,\alpha,\beta,\delta,0)$, where $\alpha > \beta > 0$ and $\gamma = \sqrt{\alpha^2 - \beta^2}$. Assume either that $\delta > 0$ and $\lambda\in\R$, or that $\delta = 0$ and $\lambda > 1/2$. Then
\begin{equation}\label{eq:results.conj3.7.bound}
\Med(X) > \frac{\beta}{\gamma^2}\Big[\lambda-1/2 + \sqrt{(\lambda-1/2)^2 + \delta^2\gamma^2}\Big].
\end{equation}
\end{theorem}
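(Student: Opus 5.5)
The plan is to exploit the normal variance-mean mixture \eqref{eq:definitions.GH.mixture} and to treat $\beta$ as a free parameter with $\gamma$ (and $\lambda,\delta$) held fixed, so that $\alpha=\sqrt{\gamma^2+\beta^2}$ moves with $\beta$. The point of this choice is that the mixing law $W\sim\GIG(\lambda,\delta,\gamma)$ does not depend on $\beta$. Write
\[
c\leqdef\frac{\lambda-1/2+\sqrt{(\lambda-1/2)^2+\delta^2\gamma^2}}{\gamma^2},
\]
which also does not depend on $\beta$, so that the claimed bound reads $\Med(X_\beta)>\beta c$ with $X_\beta=\beta W+\sqrt{W}N$. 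Note that $c>0$ under either hypothesis: if $\delta>0$ the square root strictly exceeds $|\lambda-1/2|$, and if $\delta=0$ then $c=(2\lambda-1)/\gamma^2>0$ since $\lambda>1/2$. Because $X_\beta$ has a continuous strictly positive density, it suffices to show $F(\beta)\leqdef\PP(X_\beta\le\beta c)<1/2$ for every $\beta>0$.

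Conditioning on $W$ gives
\[
F(\beta)=\EE\bigl[\Phi\bigl(\beta(c-W)/\sqrt W\bigr)\bigr],
\]
so $F(0)=1/2$. It is therefore enough to prove $F'(\beta)<0$ for all $\beta>0$, where differentiation under the expectation is justified by dominated convergence. We have
\[
F'(\beta)=\EE\Bigl[\varphi\bigl(\beta(c-W)/\sqrt W\bigr)\,(c-W)\,W^{-1/2}\Bigr],
\]
with $\varphi$ the standard normal density. The key step is that
\[
\varphi\bigl(\beta(c-w)/\sqrt w\bigr)\propto \exp\bigl(-\beta^2(c^2/w+w)/2\bigr),
\]
up to the constant factor $e^{\beta^2c}$. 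Multiplying this by $w^{-1/2}p_{\GIG}(w)$ yields the kernel
\[
w^{(\lambda-1/2)-1}\exp\Bigl(-\tfrac12\bigl[(\delta^2+\beta^2c^2)/w+(\gamma^2+\beta^2)w\bigr]\Bigr),
\]
which is a $\GIG(\lambda-1/2,\delta',\alpha)$ kernel with $\delta'^2=\delta^2+\beta^2c^2>0$. Since $c>0$, we have $\delta'>0$ even when $\delta=0$. Hence $\operatorname{sign}F'(\beta)=\operatorname{sign}(c-\EE W')$ with $W'\sim\GIG(\nu,\delta',\alpha)$ and $\nu\leqdef\lambda-1/2$. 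The standard GIG mean formula gives
\[
\EE W'=\frac{\delta'}{\alpha}\,\frac{K_{\nu+1}(\delta'\alpha)}{K_\nu(\delta'\alpha)}.
\]

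It remains to show $\EE W'>c$. I would invoke the known strict lower bound for the ratio of modified Bessel functions of the second kind,
\[
\frac{K_{\nu+1}(x)}{K_\nu(x)}>\frac{\nu+\sqrt{\nu^2+x^2}}{x},\qquad x>0,\ \nu\in\R,
\]
which follows, for instance, from the Riccati equation satisfied by the ratio. This bound gives $\EE W'>(\nu+\sqrt{\nu^2+\delta'^2\alpha^2})/\alpha^2$. The right-hand side is the positive root $t$ of $\alpha^2t^2-2\nu t-\delta'^2=0$, so it is $\ge c$ exactly when $\alpha^2c^2-2\nu c-\delta'^2\le0$. Substituting $\alpha^2=\gamma^2+\beta^2$ and $\delta'^2=\delta^2+\beta^2c^2$, this expression equals $\gamma^2c^2-2\nu c-\delta^2$, which is $0$ by the definition of $c$. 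Thus $\EE W'>c$ strictly, so $F'(\beta)<0$ on $(0,\infty)$. Consequently $F(\beta)<F(0)=1/2$ for all $\beta>0$, which gives \eqref{eq:results.conj3.7.bound}.

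The main obstacle is securing the Bessel ratio inequality with strictness for \emph{all} real $\nu$. Here $\nu=\lambda-1/2$ can be negative, including the regime $\lambda<0$, where one must also handle the limiting case $\gamma=0$ excluded by $\alpha>\beta$. If a uniform literature reference is unavailable, I would prove the bound directly. The function $u(x)=K_{\nu+1}(x)/K_\nu(x)$ satisfies
\[
u'=1+u^2-\frac{2\nu+1}{x}\,u,
\]
and I would compare $u$ with $g(x)=(\nu+\sqrt{\nu^2+x^2})/x$ using the asymptotics at $0$ and $\infty$, and then use the evenness $K_{-\nu}=K_\nu$ to cover negative orders. A secondary routine check is the justification of differentiation under the integral sign, including near $w\downarrow0$ when $\delta=0$.
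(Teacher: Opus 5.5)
Your proposal is correct. For $\delta>0$ it is essentially the paper's argument. Lemma~\ref{lem:GH.comparison}, applied with $a=\nu=\lambda-1/2$, differentiates $b\mapsto\PP(bW+\sqrt{W}N\le b\xi_a)$ at fixed $\gamma$ and evaluates the two resulting integrals as $K_\nu$ and $K_{\nu+1}$; your mean of the tilted $\GIG(\nu,\delta',\alpha)$ law is exactly their ratio. It then reduces the sign to the lower bound in \eqref{eq:Bessel.ratio.two.sided} via $\gamma^2\xi_a^2-2a\xi_a=\delta^2$. Your remark that the positive root of $\alpha^2t^2-2\nu t-\delta'^2$ equals $c$ is the same algebra as \eqref{eq:GH.comparison.algebra}.

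The genuine difference is the case $\delta=0$. The paper does not run the Bessel argument there, since Lemma~\ref{lem:GH.comparison} assumes $\delta>0$. Instead it quotes the variance-gamma bound $\Med(V_{r,\theta,\sigma})>(r-1)\theta$ from Corollary~\ref{cor:conj3.5}, which rests on Theorem~\ref{prop:conj3.3} and the delicate computation of $\lim_{\sigma\to\infty}\Med(V_{r,\theta,\sigma})$. You observe that for $\beta>0$ the tilted kernel has $\delta'=\beta c>0$ because $c=(2\lambda-1)/\gamma^2>0$, so the same computation applies verbatim. The hypothesis $\lambda>1/2$ enters exactly through $c>0$ and through $\EE[W^{-1/2}]<\infty$ for $W\sim\Gamma(\lambda,\gamma^2/2)$, which supplies the domination. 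Your route is uniform and self-contained. As a by-product it gives a short proof of $\Med(V_{r,\theta,\sigma})>(r-1)\theta$ for $r>1$ that bypasses Theorem~\ref{prop:conj3.3}. The paper's route additionally shows this constant is sharp (it is the $\sigma\to\infty$ limit), which the present theorem does not need.

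One correction to your fallback proof of the Bessel bound: the Riccati equation for $u=K_{\nu+1}/K_\nu$ is $u'=u^2-\frac{2\nu+1}{x}u-1$, not $u'=1+u^2-\frac{2\nu+1}{x}u$. You can check this at $\nu=-1/2$, where $u\equiv1$. With your sign the comparison runs the wrong way. With the correct equation, $g(x)=(\nu+\sqrt{\nu^2+x^2})/x$ satisfies $g^2=2\nu g/x+1$, hence $g'-\bigl(g^2-\frac{2\nu+1}{x}g-1\bigr)=\frac{g}{x}\bigl(1-\nu/\sqrt{\nu^2+x^2}\bigr)>0$. So $u-g$ can only cross zero downward. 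Since $u-g\sim1/(2x)>0$ as $x\to\infty$, it follows that $u>g$ on $(0,\infty)$ for every real $\nu$. This needs no small-$x$ asymptotics and no reflection $K_{-\nu}=K_\nu$. Finally, there is no $\gamma=0$ case to handle: $\alpha>\beta$ forces $\gamma>0$, and the Bessel argument $\delta'\alpha$ is always positive.
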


In the following theorem, we complement the lower bound of Theorem \ref{prop:conj3.7} with upper bounds for the median of the GH distribution. We introduce the notation $u_c$ to make explicit that $c$ is the fixed shift being compared in the upper-bound family. The first bound is valid uniformly over all $\lambda\in\R$ when $\delta > 0$. The second identifies the exact range of validity of the sharper choice $c = 0$ in the same family.

\begin{theorem}\label{prop:GH.upper.bounds}
Let $X\sim \mathrm{GH}(\lambda,\alpha,\beta,\delta,0)$, where $\lambda\in\R$, $\alpha > \beta > 0$, $\delta > 0$, and $\gamma = \sqrt{\alpha^2 - \beta^2}$. For $c\in\R$, set
\[
u_c \leqdef \frac{\beta}{\gamma^2}\left[\lambda - c + \sqrt{(\lambda - c)^2 + \delta^2\gamma^2}\right].
\]
Then
\begin{equation}\label{eq:results.GH.upper.global}
\Med(X) < u_{-1/2} = \frac{\beta}{\gamma^2}\left[\lambda + 1/2 + \sqrt{(\lambda + 1/2)^2 + \delta^2\gamma^2}\right].
\end{equation}
Moreover,
\[
u_0 = \frac{\beta}{\gamma^2}\left[\lambda + \sqrt{\lambda^2 + \delta^2\gamma^2}\right],
\]
and
\begin{equation}\label{eq:results.GH.upper.zero}
\begin{cases}
\Med(X) < u_0, & \lambda > 0,\\
\Med(X) = u_0, & \lambda = 0,\\
\Med(X) > u_0, & \lambda < 0.
\end{cases}
\end{equation}
Furthermore, $c = -1/2$ is the largest real constant for which the upper bound $\Med(X) < u_c$ is valid uniformly over all $\lambda\in\R$ and $\delta > 0$. On the range $\lambda \geq 1$, the largest real constant for which $\Med(X) < u_c$ is valid uniformly over all $\delta > 0$ is $c = 0$.
\end{theorem}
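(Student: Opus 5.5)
The plan is to condition on the mixing variable in the representation \eqref{eq:definitions.GH.mixture}, writing $X = \beta W + \sqrt{W}N$ with $W\sim\GIG(\lambda,\delta,\gamma)$. Since $X$ has a continuous, strictly positive density, $\Med(X) < u$ is equivalent to $\PP(X\leq u) > 1/2$. Conditioning on $W$ gives
\[
\PP(X\leq u)-\tfrac12=\EE\big[h_u(W)\big],\qquad h_u(w)\leqdef\Phi\Big(\frac{u-\beta w}{\sqrt w}\Big)-\tfrac12 .
\]
Put $w_0 = u/\beta$. Then $(u-\beta w)/\sqrt w=\beta(w_0-w)/\sqrt w$, and this is antisymmetric under the inversion $w\mapsto w^*\leqdef w_0^2/w$. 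Hence $h_u(w^*)=-h_u(w)$, with $h_u>0$ on $(0,w_0)$. Changing variables $w\mapsto w^*$ on $(w_0,\infty)$ gives
\[
\PP(X\le u)-\tfrac12=\int_0^{w_0}h_u(w)\Big[p_{\GIG}(w)-\frac{w_0^2}{w^2}\,p_{\GIG}\Big(\frac{w_0^2}{w}\Big)\Big]\rd w .
\]

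\textbf{The log-ratio.} Write $t=\log(w_0/w)>0$. A direct computation from the GIG density shows that the logarithm of $w\,p_{\GIG}(w)/\big(w^*p_{\GIG}(w^*)\big)$ equals
\[
R(t)=-2\lambda t+\big(\gamma^2w_0-\delta^2/w_0\big)\sinh t .
\]
Now take $u=u_c$. Then $w_0=[\lambda-c+\sqrt{(\lambda-c)^2+\delta^2\gamma^2}]/\gamma^2$ is the positive root of $\gamma^2m^2-2(\lambda-c)m-\delta^2=0$, so $\gamma^2w_0-\delta^2/w_0=2(\lambda-c)$. Therefore
\[
R(t)=2(\lambda-c)\sinh t-2\lambda t .
\]

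\textbf{The $c=0$ and $\lambda\ge -1/2$ cases.} For $c=0$ we get $R(t)=2\lambda(\sinh t-t)$, which has the sign of $\lambda$ for all $t>0$. The bracket in the integral therefore has constant sign, which gives \eqref{eq:results.GH.upper.zero} immediately; in particular, for $\lambda=0$ the integrand vanishes. For $c=-1/2$ we get $R(t)=(2\lambda+1)\sinh t-2\lambda t$. This is positive for all $t>0$ whenever $\lambda\ge-1/2$: for $\lambda\ge0$ use $\sinh t\ge t$, and for $-1/2\le\lambda<0$ both terms are nonnegative. This proves \eqref{eq:results.GH.upper.global} on that range.

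\textbf{The case $\lambda<-1/2$ (main obstacle).} Here $R$ is positive near $t=0$, since $R(t)\sim t$, but negative for large $t$. Pointwise positivity fails, and a genuine integral argument is needed. My first attempt would use the single sign change of $R$ together with monotonicity of $h_u$ in $t$: $h_u$ is increasing in $t$, because $(w_0-w)/\sqrt w$ increases as $w$ decreases. This suggests a Chebyshev-type argument, reducing matters to one weighted integral. That integral can be expressed through GIG moments, i.e.\ Bessel ratios $K_{\lambda+1}/K_\lambda$, and I would control it with the known bounds for ratios of modified Bessel functions of the second kind. If that fails, I would try the alternative of using the relation $p_{\GIG(\lambda)}(w)\propto w^{2\lambda}p_{\GIG(-\lambda)}(w)$, with $\delta$ and $\gamma$ swapped after inversion, to transfer the problem to positive index. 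Making one of these routes work is where I expect the difficulty.

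\textbf{Sharpness.} It remains to show that no larger constant works. For the claim with $c>-1/2$ over all $\lambda$, I would take $\delta\to\infty$ with $\lambda$ fixed, and also let $\lambda\to-\infty$, where $W$ becomes nearly normal. I would expand $\Med(X)$ via an Edgeworth correction, $\Med\approx\EE X-\kappa_3/(6\operatorname{Var}X)$, and compute $\EE X=\beta\,\EE W$ through the asymptotics $K_{\lambda+1}(z)/K_\lambda(z)=1+(2\lambda+1)/(2z)+O(z^{-2})$. Comparing with $u_c=\beta\delta/\gamma+\beta(\lambda-c)/\gamma^2+o(1)$, the aim is to show that $\Med(X)-u_c$ tends to a positive multiple of $(c+1/2)$ in the appropriate regime. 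For $\lambda\ge1$ and $c>0$, the case $\delta\to0$ gives the VG limit, where $u_c\to(2\lambda-2c)\beta/\gamma^2$. I would then compare with the large-$\sigma$ or small-$\sigma$ VG median asymptotics of Corollary~\ref{cor:conj3.5}, or with the $\delta\to\infty$ expansion at finite $\lambda$, to exhibit $\Med(X)>u_c$.
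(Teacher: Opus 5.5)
Your inversion pairing and the log-ratio $R(t)=2(\lambda-c)\sinh t-2\lambda t$ are correct. They prove the trichotomy \eqref{eq:results.GH.upper.zero}, and the bound $\Med(X)<u_{-1/2}$ for $\lambda\ge-1/2$, without any Bessel-ratio inequality. For those cases this is cleaner than the paper's route.

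The genuine gap is the case $\lambda<-1/2$ of \eqref{eq:results.GH.upper.global}, and the Chebyshev argument you propose runs the wrong way. The bracket is positive for $t<t_*$ and negative for $t>t_*$, while $h_u$ increases in $t$. So $h_u$ puts its largest weight exactly where the bracket is negative, and the comparison only gives $\int h_u(\cdots)\le h_u(t_*)\int(\cdots)$, which is an upper bound.

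Inverting $W$ does not help either. With $V=1/W\sim\GIG(-\lambda,\gamma,\delta)$ one gets $\PP(X\le u)=1-\PP(uV+\sqrt V N<\beta)$. For $u=u_{-1/2}$ this is exactly the lower bound of Theorem~\ref{prop:conj3.7} at index $-\lambda>1/2$, where your bracket again changes sign.

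The paper gets the missing $\beta$-uniform input by differentiating in $b=\beta$ at fixed $\gamma$. The factor $\exp(-b^2(\xi_a-w)^2/(2w))$ tilts the GIG density into another GIG-type density. The sign of $\Psi_a'(b)$ then becomes the comparison of $K_{\nu+1}(x_b)/K_\nu(x_b)$ with $(\nu+1+\sqrt{(\nu+1)^2+x_b^2})/x_b$, which the upper bound in \eqref{eq:Bessel.ratio.two.sided} settles for every real $\nu$.

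Your framework can also be repaired. Write $h_u=\beta g\,\varrho(\beta g)$ with $g(w)=(w_0-w)/\sqrt w$ and $\varrho(x)=(\Phi(x)-\tfrac12)/x$; $\varrho$ is decreasing on $(0,\infty)$ because $\Phi$ is concave there. Then $\varrho(\beta g)$ is increasing in $w$ on $(0,w_0)$, so it is largest where the bracket is positive. Since $g(w_0^2/w)=-g(w)$, the same pairing gives $\int_0^{w_0}g\,[\cdots]\,\rd w=\EE[g(W)]$. With $w_*=w_0e^{-t_*}$, the single-crossing argument now goes the right way:
\[
\PP(X\le u_{-1/2})-\tfrac12\ \ge\ \beta\,\varrho\bigl(\beta g(w_*)\bigr)\,\EE\Bigl[\frac{w_0-W}{\sqrt W}\Bigr].
\]
Here $\EE[(w_0-W)/\sqrt W]=w_0\EE[W^{-1/2}]-\EE[W^{1/2}]$ has the sign of $\frac{\nu+1+\sqrt{(\nu+1)^2+D^2}}{D}-\frac{K_{\nu+1}(D)}{K_\nu(D)}>0$, with $\nu=\lambda-1/2$ and $D=\delta\gamma$.

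The optimality claims are not established either. Take the regime for which you expanded $u_c$, namely $\delta\to\infty$ at fixed $\lambda$. Your Edgeworth heuristic gives $\EE X=\beta\delta/\gamma+\beta(2\lambda+1)/(2\gamma^2)+o(1)$ and $\kappa_3(X)/(6\operatorname{Var}X)\to\beta/(2\gamma^2)$. Hence $\Med(X)-u_c\to\beta c/\gamma^2$, a multiple of $c$ rather than of $c+\tfrac12$, so it cannot refute any $c\in(-1/2,0]$. Behaviour proportional to $c+\tfrac12$ needs $D$ small relative to $|\lambda|$ with $\lambda<-1/2$, and you give no error control for the median expansion.

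For $\lambda\ge1$, the VG limit $\delta\to0$ cannot refute $c\in(0,1-\log 2]$, since there $\Med(X)<u_{1-\log 2}$ (Remark~\ref{rem:GH.upper.constants}); large $\delta$ is needed.

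Both counterexamples follow directly from your representation by linearizing in $\beta$. With $\gamma$ fixed and $\alpha=\sqrt{\gamma^2+\beta^2}$, neither the law of $W$ nor $w_0=u_c/\beta$ depends on $\beta$. By dominated convergence,
\[
\lim_{\beta\downarrow0}\frac{\PP(X\le u_c)-\tfrac12}{\beta}=\frac{1}{\sqrt{2\pi}}\,\EE\Bigl[\frac{w_0-W}{\sqrt W}\Bigr],
\]
whose sign is that of $\frac{a_0+\sqrt{a_0^2+D^2}}{D}-\frac{K_{\nu+1}(D)}{K_\nu(D)}$ with $a_0=\lambda-c$. Two asymptotic comparisons make this limit negative:
\begin{itemize}
\item for $c>-1/2$: for $\lambda<-1/2$ and small $D$, $K_{\nu+1}(D)/K_\nu(D)\sim D/(-2\lambda-1)$, against $(a_0+\sqrt{a_0^2+D^2})/D\sim D/(2(c-\lambda))$;
\item for $c>0$: for large $D$, $K_{\nu+1}(D)/K_\nu(D)=1+\lambda/D+O(D^{-2})$, against $1+(\lambda-c)/D+O(D^{-2})$.
\end{itemize}
This is exactly the paper's argument via $\Psi_{a_0}'(0+)$.
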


\begin{remark}\label{rem:GH.upper.constants}
The refined variance-gamma bound \eqref{eq:results.VG.bounds.refined}, after the change of parameters $r = 2\lambda$ and $\theta = \beta/\gamma^2$, gives the limiting $\delta = 0$ upper bound $\Med(X) < u_{1 - \log(2)}$ for $\lambda \geq 1$. Theorem~\ref{prop:GH.upper.bounds} shows that this positive value of $c$ cannot be promoted to a bound that is uniform over the GH family with $\delta > 0$.
\end{remark}

With all of our bounds for the median of the VG, McKay Type I and GH distributions presented, we are now able to deduce the following mode-median-mean inequalities, by combining our median bounds with those of \cite{MR4141494} for the mode, together with exact formulas for the means of these distributions.

\begin{corollary}\label{mmm} 1. Let $V_{r,\theta,\sigma}\sim \mathrm{VG}(r,\theta,\sigma,0)$, with $r > 0$, $\theta > 0$ and $\sigma > 0$. Then
\begin{equation}\label{mmm1}
\mathrm{Mode}(V_{r,\theta,\sigma}) < \Med(V_{r,\theta,\sigma}) < \EE[V_{r,\theta,\sigma}].
\end{equation}
2. Let $Z_{m,c,b}$ follow the McKay Type I distribution with density \eqref{mden} with parameters $m > -1/2$, $c > 1$ and $b > 0$. Then
\begin{equation}\label{mmm2}
\mathrm{Mode}(Z_{m,c,b}) < \Med(Z_{m,c,b}) < \EE[Z_{m,c,b}].
\end{equation}
3. Let $X\sim \mathrm{GH}(\lambda,\alpha,\beta,\delta,0)$, where $\lambda\in\R$, $\alpha > \beta > 0$, $\delta > 0$. Then
\begin{equation}\label{mmm3}
\mathrm{Mode}(X) < \Med(X).
\end{equation}
Moreover,
\begin{equation}\label{mmm4}
\Med(X) < \EE[X],
\end{equation}
so that
\begin{equation}
\mathrm{Mode}(X) < \Med(X) < \EE[X]. \label{mmm5}
\end{equation}
\end{corollary}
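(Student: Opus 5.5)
The plan is to sandwich the median between the mode bounds of \cite{MR4141494} and the exact means. For each family, the lower median bounds from Corollaries~\ref{cor:conj3.5}, \ref{cor:conj3.6} and Theorem~\ref{prop:conj3.7} should dominate the known upper bounds for the mode. Likewise, the upper median bounds from the same results and from Theorem~\ref{prop:GH.upper.bounds} should be dominated by the mean. All comparisons are elementary once the right bounds are paired, except the GH mean comparison, which needs a Bessel ratio inequality. I have not checked the exact form of the mode bounds in \cite{MR4141494}; the forms quoted below are what I expect to invoke.

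\textbf{VG.} The mean is $\EE[V_{r,\theta,\sigma}]=r\theta$, since each $Y_i$ in \eqref{eq:definitions.VG.gamma.representation} has mean $r/2$. By \eqref{eq:results.VG.bounds.general}, $\Med(V_{r,\theta,\sigma})<r\theta e^{-2/(3r)}<r\theta$. For the mode I would quote the upper bound of \cite{MR4141494}, which I expect to be of the form $\mathrm{Mode}(V_{r,\theta,\sigma})\le 0\vee(r-2)\theta$, with mode $0$ when $r\le 1$. Then split into cases:
\begin{itemize}
\item if $r>1$, we get $\mathrm{Mode}\le 0\vee(r-2)\theta<(r-1)\theta<\Med$;
\item if $r\le1$, we get $\mathrm{Mode}=0<\Med$, using the strict lower bound $0\vee(r-1)\theta<\Med$ in \eqref{eq:results.VG.bounds.general}.
\end{itemize}

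\textbf{McKay Type I.} From \eqref{eq:definitions.McKay.gamma.representation}, $\EE[Z]=(m+\tfrac12)\phi\bigl[(c-1)+(c+1)\bigr]/c=(2m+1)\phi$. The upper bound in \eqref{eq:results.McKay.bounds.general} gives $\Med<(2m+1)\phi e^{-1/(3(2m+1))}<(2m+1)\phi$. For the mode I would use the upper bound of \cite{MR4141494}, which I expect to be of the form $\mathrm{Mode}\le 0\vee(2m-1)\phi$. Compare it with the lower bound $\Med>(2m+1-2\log 2)\phi$. Since $2\log2<2$, we have $(2m-1)\phi<(2m+1-2\log2)\phi$. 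The case where the mode is $0$ is handled by $\Med>(2m+1)\phi e^{-2\log 2/(2m+1)}>0$. The statement is phrased in terms of $b$, but this only amounts to setting $\phi=bc/(c^2-1)$.

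\textbf{GH, mode versus median.} The function $c\mapsto u_c$ is strictly decreasing, because $t\mapsto t+\sqrt{t^2+\delta^2\gamma^2}$ is strictly increasing. The mode bound of \cite{MR4141494} should read $\mathrm{Mode}(X)<u_1=\frac{\beta}{\gamma^2}\bigl[\lambda-1+\sqrt{(\lambda-1)^2+\delta^2\gamma^2}\bigr]$. Theorem~\ref{prop:conj3.7} gives $\Med(X)>u_{1/2}$ for all $\lambda\in\R$ when $\delta>0$. Hence $\mathrm{Mode}(X)<u_1<u_{1/2}<\Med(X)$, which is \eqref{mmm3}.

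\textbf{GH, median versus mean.} This is the step I expect to be the main obstacle. From \eqref{eq:definitions.GH.mixture}, $\EE[X]=\beta\,\EE[W]$, and the standard GIG mean formula gives
\[
\EE[X]=\frac{\beta\delta}{\gamma}\,\frac{K_{\lambda+1}(\delta\gamma)}{K_{\lambda}(\delta\gamma)}.
\]
By \eqref{eq:results.GH.upper.global} it suffices to show $u_{-1/2}\le\EE[X]$. With $x=\delta\gamma$ this is equivalent to
\[
\frac{K_{\lambda+1}(x)}{K_\lambda(x)}\ \ge\ \frac{\lambda+\tfrac12+\sqrt{(\lambda+\tfrac12)^2+x^2}}{x},\qquad x>0,\ \lambda\in\R.
\]
This is a known lower bound for the ratio $K_{\nu+1}/K_\nu$ (Segura-type bounds, valid for all real $\nu$). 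It is presumably the same family of Bessel ratio bounds used in Lemma~\ref{lem:GH.comparison}. I would quote it and check that it holds for every real order, including negative $\lambda$.

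If that bound were only available for $\nu\ge -1/2$, I would reduce the remaining range with the reflection $K_{-\nu}=K_\nu$ and the recurrence
\[
K_{\nu+1}(x)=K_{\nu-1}(x)+\frac{2\nu}{x}K_\nu(x).
\]
Combining \eqref{mmm3} and \eqref{mmm4} then gives \eqref{mmm5}.
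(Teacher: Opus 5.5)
\textbf{GH, median versus mean.} This step fails. The inequality you plan to quote, $K_{\lambda+1}(x)/K_\lambda(x)\ge\bigl(\lambda+\tfrac12+\sqrt{(\lambda+\tfrac12)^2+x^2}\bigr)/x$ for all real $\lambda$, is false.
\begin{itemize}
\item By \eqref{eq:Bessel.ratio.half} with $\nu=\lambda$, it holds only for $\lambda\le-1/2$ and is strictly reversed for $\lambda>-1/2$. For instance, for $\lambda>0$ and $x\downarrow0$ the left side is $\sim2\lambda/x$ while the right side is $\sim(2\lambda+1)/x$.
\item Hence $u_{-1/2}>\EE[X]$ whenever $\lambda>-1/2$. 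Your fallback has the range of validity backwards, and no reflection or recurrence can rescue a false inequality. As written, your argument proves \eqref{mmm4} only for $\lambda\le-1/2$.
\item For $\lambda\ge0$, pair the sharper bound $\Med(X)\le u_0$ from \eqref{eq:results.GH.upper.zero} with $\EE[X]>u_0$. The latter is the lower bound in \eqref{eq:Bessel.ratio.two.sided} at order $\lambda$.
\item For $-1/2<\lambda<0$, neither $u_0$ nor $u_{-1/2}$ helps, since there $\Med(X)>u_0$ while $u_{-1/2}>\EE[X]$. The missing idea is to apply Lemma~\ref{lem:GH.comparison} again with a new choice of $a$. The paper takes $\xi=\EE[X]/\beta$ and $a=(\gamma^2\xi^2-\delta^2)/(2\xi)$, so that $\xi_a=\xi$. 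Then $a>0$ because $K_{\lambda+1}(D)>K_\lambda(D)$ for $\lambda>-1/2$. With $\nu=\lambda-1/2<-1/2$ one has $K_{\nu+1}(x)/K_\nu(x)<1<(a+\sqrt{a^2+x^2})/x$. So $\Psi_a$ is strictly increasing and $\PP(X\le\EE[X])=\Psi_a(\beta)>1/2$.
\item Taking $a=0$ works equally well. It gives $\Med(X)<\beta\delta/\gamma<\EE[X]$ from the same two facts.
\end{itemize}

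\textbf{McKay, mode versus median.} The mode bound in \cite{MR4141494} is $\mathrm{Mode}(Z_{m,c,b})<2m\phi$ for $m>0$, with mode $0$ only for $m\le0$. It is not $0\vee(2m-1)\phi$, and your guessed form is actually false. For $0<m<1/2$ the density behaves like $x^{2m}$ at the origin, so the mode is positive. As $c\to\infty$ the law tends to $\Gamma(2m+1,1/\phi)$, whose mode is $2m\phi$.

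Against $2m\phi$, the lower bound $(2m+1-2\log(2))\phi$ you chose never suffices, because $1-2\log(2)<0$. You need two sharper bounds:
\begin{itemize}
\item For $m\ge1/2$, use the refined bound $\Med(Z_{m,c,b})>(2m+1/3)\phi$ from \eqref{eq:results.McKay.bounds.refined}.
\item For $0<m<1/2$, use the bound $(2m+1)\phi e^{-2\log(2)/(2m+1)}$ together with the elementary inequality $(2m+1)e^{-2\log(2)/(2m+1)}>2m$. This holds because $f(m)=\log((2m+1)/(2m))-2\log(2)/(2m+1)$ is decreasing on $(0,1/2)$ with $f(1/2)=0$.
\end{itemize}

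The rest of your plan is fine. The VG case works as written. In the GH mode comparison, the bound actually available is $\mathrm{Mode}(X)<u_{1/2}$ rather than $u_1$, but Theorem~\ref{prop:conj3.7} gives $\Med(X)>u_{1/2}$ directly, so that step survives.
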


\section{Auxiliary results}\label{subsec:proof.auxiliary}

We begin this section by stating a lemma concerning Schur-convexity of gamma sums. The lemma upgrades Lemma 3.4 of \cite{MR4141494} to include a strict Schur-convexity assertion, which allows us to prove Theorem \ref{prop:conj3.2} with a strictly increasing assertion rather than a merely non-decreasing statement as was given in \cite[Conjecture 3.2]{MR4141494}. In order to state the lemma, we recall the relevant two-dimensional form of the majorization order. For $\bb{x} = (x_1,x_2)$ and $\bb{y} = (y_1,y_2)$ in $\R^2$, we say that $\bb{x}$ is majorized by $\bb{y}$, and write $\bb{x}\prec \bb{y}$, if
\[
x_1 + x_2 = y_1 + y_2, \qquad \max\{x_1,x_2\} \leq \max\{y_1,y_2\}.
\]
Equivalently, after arranging both vectors in decreasing order, the largest coordinate of $\bb{x}$ is no larger than the largest coordinate of $\bb{y}$, and the two coordinate sums are equal. A function $f:D\to\R$, where $D\subseteq\R^2$, is called Schur-convex on $D$ if, for all $\bb{x},\bb{y}\in D$ such that $\bb{x}\prec \bb{y}$, one has $f(\bb{x}) \leq f(\bb{y})$. It is strictly Schur-convex on a fixed-sum line if this inequality is strict whenever $\bb{x}\prec \bb{y}$ and $\bb{x}$ is not a permutation of $\bb{y}$. For further details on Schur convexity and its applications, we refer the reader to the classic monograph \cite{mo09}.

\begin{lemma}[Schur-convexity of gamma sums]\label{lem:Schur.gamma}
Let $X_1$ and $X_2$ be independent $\Gamma(r,1)$ random variables. For $\bb{c} = (c_1,c_2)$ with $c_1,c_2 > 0$, define
\[
F(\bb{c};t) \leqdef \PP(c_1X_1 + c_2X_2 \leq t).
\]
Then, for every $s > 0$ and every $0 \leq t \leq rs$, the restriction of $\bb{c}\mapsto F(\bb{c};t)$ to the fixed-sum line $\{\bb{c}\in(0,\infty)^2:c_1 + c_2 = s\}$ is Schur-convex. More explicitly, if $\bb{c},\bb{d}\in(0,\infty)^2$ satisfy $\bb{c}\prec \bb{d}$, $c_1 + c_2 = d_1 + d_2 = s$, and $0 \leq t \leq rs$, then $F(\bb{c};t) \leq F(\bb{d};t)$. Moreover, if $0 < t < rs$ and $\bb{c}$ is not a permutation of $\bb{d}$, then the comparison is strict: $F(\bb{c};t) < F(\bb{d};t)$.
\end{lemma}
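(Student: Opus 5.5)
Along the fixed-sum line we will write $c_1 = s/2 + u$ and $c_2 = s/2 - u$ with $|u| < s/2$. Since $F$ is symmetric under swapping $c_1,c_2$ (by exchangeability of $X_1,X_2$), majorization on this line amounts to comparing $|u|$. It therefore suffices to show that $g(u) \leqdef F((s/2+u,s/2-u);t)$ is non-decreasing on $[0,s/2)$, and strictly increasing there when $0<t<rs$. By scaling ($F(\bb{c};t)=F(\bb{c}/s;t/s)$) we may take $s=1$, so that $0\le t\le r$.

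\textbf{Step 1: differentiate in $u$.} We have $\partial_u F = \partial_{c_1}F - \partial_{c_2}F$. Writing $F(\bb{c};t)=\EE[\mathds{1}\{c_1X_1+c_2X_2\le t\}]$, differentiation gives $\partial_{c_i}F = -\EE[X_i\,\delta(t - c_1X_1 - c_2X_2)]$. Rigorously, $\partial_{c_i}F(\bb{c};t) = -\EE[X_i \mid c_1X_1+c_2X_2=t]\,f_{\bb{c}}(t)$, where $f_{\bb{c}}$ is the density of $T=c_1X_1+c_2X_2$. Hence
\[
g'(u) = f_{\bb{c}}(t)\,\bigl(\EE[X_2\mid T=t]-\EE[X_1\mid T=t]\bigr).
\]
On the event $T=t$ with $c_1+c_2=1$, we have $X_2 = (t-c_1X_1)/c_2$. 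So, with $c_1>c_2$ (i.e.\ $u>0$), the sign of $g'(u)$ is the sign of $t - \EE[X_1\mid T=t]$. This is because $\EE[X_2-X_1\mid T=t] = (t - (c_1+c_2)\EE[X_1\mid T=t])/c_2$.

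\textbf{Step 2: reduce to a moment inequality.} We must show that $\EE[X_1\mid T=t]\le t$, strictly when $0<t<r$. The conditional law of $X_1$ given $T=t$ has density proportional to $x^{r-1}(t-c_1x)^{r-1}e^{-x(1-c_1/c_2)}$ on $(0,t/c_1)$. Substituting $x=(t/c_1)p$ shows that this is a Beta$(r,r)$ density tilted by $e^{\kappa p}$ with $\kappa = t(c_1-c_2)/(c_1c_2)>0$ (using $c_1+c_2=1$). The inequality becomes $\EE_\kappa[P] \le c_1$, where $P$ has density $\propto p^{r-1}(1-p)^{r-1}e^{\kappa p}$.

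\textbf{Step 3 (main obstacle).} We need a bound on the tilted Beta mean that is sharp enough to hold up to $t=r$. The plan is to use the integration-by-parts (Stein) identity for this density: $\EE_\kappa[(r-1)(1-2P) + \kappa P(1-P)]=0$. Alternatively, one can use the recognition that $\EE_\kappa[P]$ is a ratio of confluent hypergeometric functions ${}_1F_1(r+1;2r+1;\kappa)/{}_1F_1(r;2r;\kappa)$ and invoke Amos/Turán-type bounds on such ratios. The target inequality reads $m(\kappa)\le c_1$, where $\kappa$ depends on $t$ and is maximal at $t=r$. Since $m$ is increasing in $\kappa$, it suffices to treat $t=r$, i.e.\ $\kappa_0 = r(c_1-c_2)/(c_1c_2)$.

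The Stein identity gives $2(r-1)m + \kappa\,\EE[P^2] = r-1+\kappa m$. Using $\EE[P^2]\ge m^2$ then yields a quadratic inequality in $m$. I expect this inequality to reproduce exactly $m(\kappa_0) < c_1$, since the Jensen gap is strictly positive. A direct check is reassuring: the bound $c_1$ solves $\kappa c_1(1-c_1) = (r-1)(2c_1-1)+r(2c_1-1)$-type relations at $\kappa_0$. Making this algebra close, possibly needing a sharper variance bound when $r<1$, is the delicate point.

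\textbf{Step 4: conclude.} With $g'>0$ on $(0,1/2)$ for $0<t<r$, $g$ is strictly increasing. Combined with symmetry, this gives $F(\bb{c};t)<F(\bb{d};t)$ whenever $\bb{c}\prec\bb{d}$ and $\bb{c}$ is not a permutation of $\bb{d}$. The endpoint cases $t=0$ (where both sides equal $0$) and $t=r$ (non-strict, by continuity in $t$) yield the non-strict statement.
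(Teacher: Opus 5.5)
Your Steps 1--2 are sound. With $s=1$ and $c_1>c_2$, the sign of $g'(u)$ is that of $t-\EE[X_1\mid T=t]$. This is equivalent to $m\leqdef\EE_\kappa[P]<c_1$ for the tilted $\operatorname{Beta}(r,r)$ law with $\kappa=t(c_1-c_2)/(c_1c_2)$. This route differs genuinely from the paper, which proves nothing itself here: it imports the non-strict statement from \cite[Lemma~3.4]{MR4141494} and the strict one from \cite[Theorem~1(a),(b)]{BockDiaconisHufferPerlman1987}.

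However, Step~3, which carries the entire content of the lemma, has a genuine gap. You do not carry it out, and the identity you propose to use is false. Integrating $\frac{\rd}{\rd p}\bigl[p^r(1-p)^re^{\kappa p}\bigr]$ over $(0,1)$ gives $\EE_\kappa[r(1-2P)+\kappa P(1-P)]=0$, with coefficient $r$, not $r-1$. For $r=1$ your version would assert $\EE_\kappa[P(1-P)]=0$, which is impossible. With your identity, Jensen leads to the quadratic $(r-1)(1-2p)+\kappa_0p(1-p)$, which equals $2c_1-1>0$ at $p=c_1$. It therefore cannot give $m(\kappa_0)\le c_1$, which is why your ``direct check'' does not close. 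Your ${}_1F_1$ expression is also missing a factor $1/2$.

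With the correct identity the argument closes at once, for every $r>0$, with no case split and no reduction to $t=r$. Put $\phi(p)\leqdef r(1-2p)+\kappa p(1-p)$.
\begin{itemize}
\item The identity gives $\phi(m)=\kappa\operatorname{Var}_\kappa(P)>0$.
\item Since $\kappa c_1c_2=t(c_1-c_2)$, we get $\phi(c_1)=(t-r)(c_1-c_2)\le 0$ for $0<t\le r$.
\item $\phi$ is concave with $\phi(0)=r>0$, so $\{p\in[0,1]:\phi(p)>0\}$ is an interval containing $0$. It contains $m$ but not $c_1$, hence $m<c_1$.
\end{itemize}
So $g'>0$ on $(0,1/2)$ for all $0<t\le r$, which gives strictness even at $t=rs$.

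Once repaired, your proof is self-contained and elementary. The paper's citation route is shorter on the page but rests entirely on the external result of Bock et al.
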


\begin{proof}
The Schur-convexity assertion is quoted in this form as \cite[Lemma~3.4]{MR4141494}, taken from \cite{BockDiaconisHufferPerlman1987}. To establish the strict comparison, we use \cite[Theorem~1, parts (a) and (b)]{BockDiaconisHufferPerlman1987}. Since $X_1$ and $X_2$ are independent and identically distributed, $F(\bb{c};t)$ is invariant under permutations of the coordinates of $\bb{c}$. We may therefore suppose, without loss of generality, that $c_1 \geq c_2$ and $d_1 \geq d_2$. Set $p \leqdef c_1/s$ and $q \leqdef d_1/s$. Then $1/2 \leq p \leq q < 1$, where $p \leq q$ follows from $\bb{c} \prec \bb{d}$. Moreover, $p < q$ if and only if $\bb{c}$ is not a permutation of $\bb{d}$.

For $v \in [1/2,1)$, define $H_v(u) \leqdef \PP\bigl(vX_1 + (1 - v)X_2 \leq u\bigr)$. By \cite[Theorem~1, parts (a) and (b)]{BockDiaconisHufferPerlman1987}, for each $v \in (1/2,1)$ there exists a value $t_0(v) > r$ such that $\frac{\partial}{\partial v}H_v(u) > 0$ for $0 < u < t_0(v)$. Consequently, for each fixed $0 < u \leq r$, the function $v\mapsto H_v(u)$ is strictly increasing on $(1/2,1)$. Since $v\mapsto H_v(u)$ is continuous on $[1/2,1)$, it is strictly increasing on $[1/2,1)$. Also, $H_v(0) = 0$ for all $v \in [1/2,1)$.

By exchangeability of $X_1$ and $X_2$ and a rescaling, we have $F(\bb{c};t) = H_p(t/s)$ and $F(\bb{d};t) = H_q(t/s)$. It follows that $F(\bb{c};t) \leq F(\bb{d};t)$ for $0 \leq t \leq rs$. If $0 < t < rs$ and $\bb{c}$ is not a permutation of $\bb{d}$, then $p < q$, and therefore $F(\bb{c};t) < F(\bb{d};t)$.

We use this comparison in the coefficient-sum case $c_1 + c_2 = 2$. If $0 < \alpha_2 < \alpha_1 < 1$, define $\bb{c}_{\alpha_i} \leqdef (\alpha_i,2 - \alpha_i)$, $i \in \{1,2\}$. Then $\bb{c}_{\alpha_1} \prec \bb{c}_{\alpha_2}$, because the two coordinate sums are equal to $2$ and $\max\{\alpha_1,2 - \alpha_1\} = 2 - \alpha_1 < 2 - \alpha_2 = \max\{\alpha_2,2 - \alpha_2\}$. The two coefficient vectors $\bb{c}_{\alpha_1}$ and $\bb{c}_{\alpha_2}$ are not permutations of one another. Hence the strict Schur-convexity comparison gives
\begin{equation}\label{eq:Schur.special.case}
\PP(Z_{\alpha_1} \leq t) = F(\bb{c}_{\alpha_1};t) < F(\bb{c}_{\alpha_2};t) = \PP(Z_{\alpha_2} \leq t), \qquad 0 < t < 2r.
\end{equation}
This concludes the proof.
\end{proof}

The gamma distribution reflection estimate given in the following lemma will be used in conjunction with Lemma \ref{lem:Schur.gamma} to prove Theorem \ref{prop:conj3.2}.

\begin{lemma}\label{lem:gamma.reflection.bound}
Let $G_k$ denote the cdf of the $\Gamma(k,1)$ distribution. For every $k > 0$ and $u\in[0,1)$,
\begin{equation}\label{eq:gamma.reflection.bound}
G_k\left(\frac{k}{1 + u}\right) + G_k\left(\frac{k}{1 - u}\right) > 1.
\end{equation}
\end{lemma}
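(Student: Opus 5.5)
The plan is to study
\[
\varphi(u) \leqdef G_k\!\left(\frac{k}{1+u}\right) + G_k\!\left(\frac{k}{1-u}\right) - 1, \qquad u\in[0,1),
\]
as a function of $u$. I will show that $\varphi$ is unimodal, first increasing and then decreasing. Its infimum over $[0,1)$ is then attained at one of the two ends, and it suffices to check that $\varphi$ is strictly positive at $u=0$ and has a strictly positive limit as $u\uparrow1$.

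\emph{Endpoints.} At $u=0$ we have $\varphi(0) = 2G_k(k) - 1$. This is positive if and only if $\Med(\Gamma(k,1)) < k$. That inequality follows from the upper bound in \eqref{twoside1}: with $\lambda=1$ it gives $\Med < k e^{-1/(3k)} < k$. As $u\uparrow 1$, we have $k/(1-u)\to\infty$, so $G_k(k/(1-u))\to 1$. Hence $\varphi(u)\to G_k(k/2) > 0$, using that the gamma density is positive on $(0,\infty)$.

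\emph{Shape of $\varphi$.} Write $g_k(x) = x^{k-1}e^{-x}/\Gamma(k)$, and set $x_\pm = k/(1\pm u)$. Differentiating, and using $\tfrac{\rd}{\rd u}\,k/(1\pm u) = \mp k/(1\pm u)^2 = \mp x_\pm^2/k$, gives
\[
\varphi'(u) = \frac{1}{k\,\Gamma(k)}\Bigl[h(x_-) - h(x_+)\Bigr], \qquad h(x) \leqdef x^{k+1}e^{-x}.
\]
The sign of $\varphi'(u)$ is therefore the sign of
\[
\psi(u) \leqdef \log h(x_-) - \log h(x_+) = (k+1)\log\frac{1+u}{1-u} - \frac{2ku}{1-u^2}.
\]
A direct computation gives
\[
\psi'(u) = \frac{2}{(1-u^2)^2}\Bigl[(k+1)(1-u^2) - k(1+u^2)\Bigr] = \frac{2\bigl(1-(2k+1)u^2\bigr)}{(1-u^2)^2}.
\]
So $\psi$ is strictly increasing on $[0,u_*]$ and strictly decreasing on $[u_*,1)$, where $u_* = (2k+1)^{-1/2}$.

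Since $\psi(0)=0$ and $\psi(u)\to-\infty$ as $u\uparrow1$ (the rational term dominates the logarithm), $\psi$ has exactly one zero $u_0\in(u_*,1)$. It is positive on $(0,u_0)$ and negative on $(u_0,1)$. Consequently $\varphi$ increases on $[0,u_0]$ and decreases on $[u_0,1)$. It follows that
\[
\varphi(u) \geq \min\Bigl\{\varphi(0),\ \lim_{v\uparrow1}\varphi(v)\Bigr\} > 0
\]
for every $u\in[0,1)$. On the decreasing branch this uses $\varphi(u) > \lim_{v\uparrow1}\varphi(v)$, which holds because the decrease is strict. This proves \eqref{eq:gamma.reflection.bound}.

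The only genuinely nontrivial input is the $u=0$ case, i.e.\ the median–mean inequality for the gamma distribution, and this is supplied by \eqref{twoside1}. The remaining work is the elementary sign analysis of $\psi$. There the point needing care is that $\psi$ has a single sign change, which follows from the unimodality of $\psi$ together with $\psi(0)=0$.
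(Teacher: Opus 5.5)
Your proof is correct and follows the paper's argument essentially step for step. It uses the same endpoint checks, the same sign function $(k+1)\log\frac{1+u}{1-u}-\frac{2ku}{1-u^2}$ for the derivative, and the same single-sign-change argument via its derivative $2\bigl(1-(2k+1)u^2\bigr)/(1-u^2)^2$. The only difference is at $u=0$: you get $G_k(k)>1/2$ from the external median bound \eqref{twoside1}, whereas the paper proves it directly via the substitution $y=k^2/x$ and the monotonicity of $t^{-1}-t+2\log(t)$. Both are valid, and yours introduces no circularity.
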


\begin{proof}
In this proof, we will let $g_k$ denote the density of the $\Gamma(k,1)$ distribution. We also let
\[
H(u) \leqdef G_k\left(\frac{k}{1 + u}\right) + G_k\left(\frac{k}{1 - u}\right), \qquad 0 \leq u < 1.
\]
We first show that $G_k(k) > 1/2$. By the change of variables $y = k^2/x$,
\[
\int_k^{\infty} y^{k-1}e^{-y} \, \rd y = k^{2k}\int_0^k x^{-k-1}e^{-k^2/x} \, \rd x.
\]
For $0 < x < k$, with $t = x/k$,
\[
\log\left(\frac{x^{k-1}e^{-x}}{k^{2k}x^{-k-1}e^{-k^2/x}}\right) = k\left(\frac{1}{t} - t + 2\log(t)\right) > 0,
\]
because the function $\psi(t) \leqdef t^{-1} - t + 2\log(t)$ satisfies $\psi(1) = 0$ and $\psi'(t) = -(1 - t)^2/t^2 \leq 0$ on $(0,1)$. Hence
\[
\int_0^k x^{k-1}e^{-x} \, \rd x > \int_k^{\infty} y^{k-1}e^{-y} \, \rd y,
\]
and so $G_k(k) > 1/2$. Therefore $H(0) = 2G_k(k) > 1$. Also $\lim_{u\uparrow1}H(u) = G_k(k/2) + 1 > 1$. It remains to check that $H$ has no interior minimum. Differentiation gives
\[
H'(u) = \frac{k}{(1 - u)^2}g_k\left(\frac{k}{1 - u}\right) - \frac{k}{(1 + u)^2}g_k\left(\frac{k}{1 + u}\right).
\]
Let
\[
A(u) \leqdef \frac{k}{(1 - u)^2}g_k\left(\frac{k}{1 - u}\right), \qquad B(u) \leqdef \frac{k}{(1 + u)^2}g_k\left(\frac{k}{1 + u}\right).
\]
Then $H'(u) = A(u) - B(u)$, and $A(u),B(u) > 0$. Hence the sign of $H'(u)$ is the sign of $A(u)/B(u) - 1$, equivalently the sign of $\log(A(u)/B(u))$. Using that $g_k(x) = x^{k-1}e^{-x}/ \Gamma(k)$, $x > 0$, we obtain
\[
\begin{aligned}
\log\left(\frac{A(u)}{B(u)}\right)
&= 2\log\left(\frac{1 + u}{1 - u}\right) + (k - 1)\log\left(\frac{1 + u}{1 - u}\right) - \frac{k}{1 - u} + \frac{k}{1 + u} \\
&= (k + 1)\log\left(\frac{1 + u}{1 - u}\right) - \frac{2ku}{1 - u^2}.
\end{aligned}
\]
Therefore the sign of $H'(u)$ is the sign of
\[
D(u) \leqdef (k + 1)\log\left(\frac{1 + u}{1 - u}\right) - \frac{2ku}{1 - u^2}.
\]
Moreover,
\[
D'(u) = \frac{2(1 - (2k + 1)u^2)}{(1 - u^2)^2}.
\]
Thus $D$ is strictly increasing on $(0,(2k + 1)^{-1/2})$ and strictly decreasing on $((2k + 1)^{-1/2},1)$. Since $D(0) = 0$ and $D(u)\to -\infty$ as $u\uparrow1$, the derivative $H'$ changes sign at most once, and if it changes sign then it changes from positive to negative. Hence $H$ has no interior minimum. The minimum of $H$ on $[0,1)$ is therefore attained at an endpoint in the limiting sense, where we have already shown that the value is strictly larger than $1$. This proves \eqref{eq:gamma.reflection.bound}.
\end{proof}

In anticipation of the forthcoming Lemma \ref{lem:GH.comparison}, we recall some inequalities for a ratio of modified Bessel functions of the second kind. For $x > 0$ and $\nu\in\R$,
\begin{equation}\label{eq:Bessel.ratio.two.sided}
\frac{\nu + \sqrt{\nu^2 + x^2}}{x} < \frac{K_{\nu + 1}(x)}{K_{\nu}(x)} < \frac{\nu + 1 + \sqrt{(\nu + 1)^2 + x^2}}{x},
\end{equation}
where the lower bound is due to \cite{rs16} and the upper bound is given in \cite{ln10}.
Moreover, the following inequality can be found in \cite{s11}: for $x > 0$,
\begin{equation}\label{eq:Bessel.ratio.half}
\frac{K_{\nu + 1}(x)}{K_{\nu}(x)}
\begin{cases}
< \dfrac{\nu + 1/2 + \sqrt{(\nu + 1/2)^2 + x^2}}{x}, & \nu > -1/2, \\[4mm]
= \dfrac{\nu + 1/2 + \sqrt{(\nu + 1/2)^2 + x^2}}{x}, & \nu = -1/2, \\[4mm]
> \dfrac{\nu + 1/2 + \sqrt{(\nu + 1/2)^2 + x^2}}{x}, & \nu < -1/2.
\end{cases}
\end{equation}
For further bounds for the ratio $K_{\nu + 1}(x)/K_{\nu}(x)$ and application areas, we refer the reader to \cite{s23} and references therein.

We are now ready to state our final auxiliary lemma, which will be used in the proofs of Theorems \ref{prop:conj3.7} and \ref{prop:GH.upper.bounds}.

\begin{lemma}\label{lem:GH.comparison}
Let $\delta > 0$, $\lambda\in\R$, $\gamma > 0$ and $a\in\R$. Set $\nu \leqdef \lambda - 1/2$, $D \leqdef \delta\gamma$ and
\[
\xi_a \leqdef \frac{a + \sqrt{a^2 + D^2}}{\gamma^2}.
\]
Let $W\sim\GIG(\lambda,\delta,\gamma)$, let $N\sim \mathcal{N}(0,1)$ be independent of $W$, and define, for $b \geq 0$,
\[
X_b \leqdef bW + \sqrt{W}N, \qquad \Psi_a(b) \leqdef \PP(X_b \leq b\xi_a) = \EE\left[\Phi\left(b\frac{\xi_a - W}{\sqrt{W}}\right)\right],
\]
where $\Phi$ is the standard normal cdf. If
\begin{equation}\label{eq:GH.comparison.lower.condition}
\frac{K_{\nu + 1}(x)}{K_{\nu}(x)} > \frac{a + \sqrt{a^2 + x^2}}{x}, \qquad \text{for all $x > 0$},
\end{equation}
then $b\mapsto \Psi_a(b)$ is strictly decreasing on $(0,\infty)$. If the inequality in \eqref{eq:GH.comparison.lower.condition} is reversed for all $x > 0$, then $b\mapsto \Psi_a(b)$ is strictly increasing on $(0,\infty)$. If equality holds in \eqref{eq:GH.comparison.lower.condition} for all $x > 0$, then $\Psi_a$ is constant.
\end{lemma}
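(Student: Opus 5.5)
The plan is to differentiate $\Psi_a$ in $b$ and show that the sign of $\Psi_a'(b)$ is exactly the sign of the inequality \eqref{eq:GH.comparison.lower.condition}, evaluated at a $b$-dependent argument $x' > 0$. The point is that the Gaussian factor $\varphi$ (the standard normal density) combines with the GIG density into another GIG kernel.

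\textbf{Step 1 (differentiate).} For $b>0$,
\[
\Psi_a'(b)=\EE\left[\varphi\left(b\frac{\xi_a-W}{\sqrt W}\right)\frac{\xi_a-W}{\sqrt W}\right].
\]
Differentiation under the expectation is justified by domination: for $b$ in a compact subinterval of $(0,\infty)$, the integrand is bounded by $C\,|\xi_a-W|/\sqrt W$. This bound is integrable, because the GIG distribution with $\delta>0$ has finite moments of all real orders, in particular $\EE[W^{-1/2}]<\infty$ and $\EE[W^{1/2}]<\infty$.

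\textbf{Step 2 (recombine into a GIG law).} Expanding the square gives
\[
b^2(\xi_a-w)^2/w=b^2\xi_a^2/w-2b^2\xi_a+b^2w .
\]
So, up to the positive factor $(2\pi)^{-1/2}e^{b^2\xi_a}$, the integrand times $p_{\mathrm{GIG}}(w)$ is proportional to
\[
w^{\lambda-1}\exp\Bigl(-\tfrac12\bigl(\delta'^2/w+\gamma'^2w\bigr)\Bigr)\frac{\xi_a-w}{\sqrt w},\qquad \delta'^2\leqdef\delta^2+b^2\xi_a^2,\quad \gamma'^2\leqdef\gamma^2+b^2 .
\]
Hence $\Psi_a'(b)$ has the sign of $\xi_a\EE[W'^{-1/2}]-\EE[W'^{1/2}]$, where $W'\sim\GIG(\lambda,\delta',\gamma')$. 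Note that $\xi_a>0$ because $D>0$, so $\delta'>0$ and $\gamma'>0$.

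\textbf{Step 3 (Bessel moments).} We use the standard GIG moment formula $\EE[W'^s]=(\delta'/\gamma')^sK_{\lambda+s}(x')/K_\lambda(x')$ with $x'\leqdef\delta'\gamma'$. The $s=\pm1/2$ moments involve $K_{\nu}$ and $K_{\nu+1}$. Factoring out the positive quantity $(\delta'/\gamma')^{1/2}K_\nu(x')/K_\lambda(x')$, the sign of $\Psi_a'(b)$ becomes the sign of
\[
\frac{\xi_a\gamma'}{\delta'}-\frac{K_{\nu+1}(x')}{K_\nu(x')}.
\]

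\textbf{Step 4 (the algebraic identity).} By definition, $\xi_a$ is the positive root of $\gamma^2\xi^2-2a\xi-\delta^2=0$. Adding and subtracting $b^2\xi_a^2$ shows that $\gamma'^2\xi_a^2-2a\xi_a-\delta'^2=0$ as well. Since $\xi_a>0$ and the product of the roots is $-\delta'^2/\gamma'^2<0$, $\xi_a$ is again the positive root, so
\[
\xi_a=\frac{a+\sqrt{a^2+x'^2}}{\gamma'^2},\qquad\text{that is,}\qquad \frac{\xi_a\gamma'}{\delta'}=\frac{a+\sqrt{a^2+x'^2}}{x'} .
\]
Therefore, under \eqref{eq:GH.comparison.lower.condition} applied at $x=x'$, we get $\Psi_a'(b)<0$ for every $b>0$, and $\Psi_a$ is strictly decreasing. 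If the inequality is reversed, then $\Psi_a'(b)>0$ and $\Psi_a$ is strictly increasing. If equality holds, then $\Psi_a'\equiv0$ and $\Psi_a$ is constant.

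\textbf{Main obstacle.} I expect the main difficulty to be spotting the recombination into $\GIG(\lambda,\delta',\gamma')$ in Step 2, and then checking that $\xi_a$ is invariant in the sense of Step 4, i.e. that it solves the same quadratic with $(\delta',\gamma')$ in place of $(\delta,\gamma)$. Once that is in place, the remaining steps (domination, the moment formula, sign bookkeeping) are routine.
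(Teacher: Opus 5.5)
Your proposal is correct and follows essentially the same route as the paper. You differentiate under the expectation and absorb the Gaussian factor into a GIG kernel with parameters $\gamma^2+b^2$ and $\delta^2+b^2\xi_a^2$. You then evaluate the $K_{\nu}$, $K_{\nu+1}$ integrals via the GIG moment formula, where the paper uses the equivalent Bessel integral identity. Finally, you use that $\xi_a$ remains the positive root of the shifted quadratic, so the sign is that of $(a+\sqrt{a^2+x'^2})/x' - K_{\nu+1}(x')/K_{\nu}(x')$.

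One small point: in the equality case, note that $\Psi_a$ is continuous at $b=0$ by bounded convergence. This makes it constant on all of $[0,\infty)$, which is how the lemma is used later.
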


\begin{proof}
Let $C$ denote a positive normalising constant which does not depend on $w$. Differentiating under the integral sign gives the following formula for $b > 0$, with the same formula holding for the right derivative at $b = 0$:
\begin{align}
\Psi_a'(b)
&= C\int_0^{\infty} \frac{\xi_a - w}{\sqrt{w}}\exp\left(-\frac{b^2(\xi_a - w)^2}{2w}\right)w^{\lambda-1}\exp\left(-\frac{\delta^2/w + \gamma^2w}{2}\right) \, \rd w \nonumber \\
&= Ce^{b^2\xi_a}\int_0^{\infty} (\xi_a - w)w^{\nu-1}\exp\left(-\frac{(\gamma^2 + b^2)w + (\delta^2 + b^2\xi_a^2)/w}{2}\right) \, \rd w. \label{inttt}
\end{align}
Differentiation under the integral sign is justified by dominated convergence, locally uniformly for $b \geq 0$, where at $b = 0$ the derivative is understood as a right derivative. Indeed, on every compact subinterval of $[0,\infty)$, the absolute value of the differentiated integrand is bounded by a constant multiple of $(1 + w)w^{\lambda-3/2}\exp(-\delta^2/(4w)- \gamma^2w/4)$, which is integrable on $(0,\infty)$ because $\delta > 0$ and $\gamma > 0$.

Define
\[
A_b \leqdef \gamma^2 + b^2, \qquad B_b \leqdef \delta^2 + b^2\xi_a^2, \qquad x_b \leqdef \sqrt{A_bB_b}.
\]
Since $b \geq 0$, we have $A_b > 0$ and $B_b > 0$. Applying the standard integral identity
\[
\int_0^{\infty} w^{\eta-1}\exp\left(-\frac{Aw + B/w}{2}\right) \, \rd w = 2\left(\frac{B}{A}\right)^{\eta/2}K_{\eta}(\sqrt{AB}), \qquad A,B > 0, \quad \eta\in\R
\]
(see \cite[Eq.\ 10.32.9]{NISTDLMF}) to \eqref{inttt} with $\eta = \nu$ and $\eta = \nu + 1$ yields
\begin{equation}\label{eq:GH.comparison.Psi.prime}
\Psi_a'(b) = 2Ce^{b^2\xi_a}\left(\frac{B_b}{A_b}\right)^{\nu/2}K_{\nu}(x_b)\left[\xi_a - \sqrt{\frac{B_b}{A_b}}\frac{K_{\nu + 1}(x_b)}{K_{\nu}(x_b)}\right].
\end{equation}
The prefactor in \eqref{eq:GH.comparison.Psi.prime} is strictly positive, so it remains to determine the sign of the expression in square brackets. By the definition of $\xi_a$,
\[
\gamma^2\xi_a^2 - 2a\xi_a - \delta^2
= \xi_a(\gamma^2\xi_a - 2a) - \delta^2
= \xi_a\left(\sqrt{a^2 + D^2} - a\right) - \delta^2
= \frac{D^2}{\gamma^2} - \delta^2 = 0.
\]
Hence $\gamma^2\xi_a^2 - 2a\xi_a = \delta^2$, so we get
\[
(\xi_a A_b - a)^2
= a^2 + A_b(\xi_a^2A_b - 2a\xi_a)
= a^2 + A_b(\gamma^2\xi_a^2 - 2a\xi_a + b^2\xi_a^2)
= a^2 + A_b B_b.
\]
Since $\xi_aA_b - a = \xi_a(\gamma^2 + b^2) - a = \sqrt{a^2 + D^2} + b^2\xi_a > 0$, it follows that $\xi_aA_b = a + \sqrt{a^2 + x_b^2}$, and dividing this identity by $\sqrt{A_bB_b} = x_b$ gives
\begin{equation}\label{eq:GH.comparison.algebra}
\xi_a\sqrt{\frac{A_b}{B_b}} = \frac{a + \sqrt{a^2 + x_b^2}}{x_b}.
\end{equation}
Using \eqref{eq:GH.comparison.algebra}, the expression in square brackets in \eqref{eq:GH.comparison.Psi.prime} can be written as
\[
\xi_a - \sqrt{\frac{B_b}{A_b}}\frac{K_{\nu + 1}(x_b)}{K_{\nu}(x_b)}
= \sqrt{\frac{B_b}{A_b}}\left[\xi_a\sqrt{\frac{A_b}{B_b}} - \frac{K_{\nu + 1}(x_b)}{K_{\nu}(x_b)}\right]
= \sqrt{\frac{B_b}{A_b}}\left[\frac{a + \sqrt{a^2 + x_b^2}}{x_b} - \frac{K_{\nu + 1}(x_b)}{K_{\nu}(x_b)}\right].
\]
Since $\sqrt{B_b/A_b} > 0$, the sign of $\Psi_a'(b)$ coincides with the sign of
\[
\frac{a + \sqrt{a^2 + x_b^2}}{x_b} - \frac{K_{\nu + 1}(x_b)}{K_{\nu}(x_b)}.
\]
If \eqref{eq:GH.comparison.lower.condition} holds, then the last display is strictly negative at $x = x_b$. Thus, by \eqref{eq:GH.comparison.Psi.prime}, $\Psi_a'(b) < 0$ for all $b > 0$, and so $b\mapsto \Psi_a(b)$ is strictly decreasing on $(0,\infty)$. If the inequality in \eqref{eq:GH.comparison.lower.condition} is reversed for all $x > 0$, then the same argument gives $\Psi_a'(b) > 0$ for all $b > 0$, and hence $b\mapsto \Psi_a(b)$ is strictly increasing on $(0,\infty)$. Finally, if equality holds in \eqref{eq:GH.comparison.lower.condition} for all $x > 0$, then \eqref{eq:GH.comparison.Psi.prime} gives $\Psi_a'(b) = 0$ for all $b > 0$. Since $\Psi_a$ is continuous on $[0,\infty)$, it is constant. This completes the proof.
\end{proof}

\section{Proofs of the main results}\label{sec:proofs}

\subsection{Proof of Theorem~\ref{prop:conj3.2}}\label{subsec:proof.conj3.2}

Let $0 < \alpha < 1$ and define $a \leqdef 1 - \alpha\in(0,1)$. By the beta-gamma decomposition in \eqref{eq:definitions.positive.gamma.representation}, $Z_{\alpha} = S(1 + aV)$, where $S\sim \Gamma(2r,1)$ is independent of the symmetric random variable $V\in(-1,1)$. Conditioning on $|V|$ and applying Lemma~\ref{lem:gamma.reflection.bound} with $k = 2r$ and $u = a|V|$ gives
\[
\PP(Z_{\alpha} \leq 2r\mid |V|) = \frac{1}{2}G_{2r}\left(\frac{2r}{1 + a|V|}\right) + \frac{1}{2}G_{2r}\left(\frac{2r}{1 - a|V|}\right) > \frac{1}{2}.
\]
Taking expectations yields
\begin{equation}\label{eq:proof.conj3.2.median.mean}
\PP(Z_{\alpha} \leq 2r) > \frac{1}{2}, \qquad 0 < \alpha < 1.
\end{equation}
Consequently $\Med(Z_{\alpha}) \leq 2r$ for every $\alpha\in(0,1)$.

Now fix $0 < \alpha_2 < \alpha_1 < 1$ and set $m_1 \leqdef \Med(Z_{\alpha_1})$. By \eqref{eq:proof.conj3.2.median.mean}, $\PP(Z_{\alpha_1} \leq 2r) > 1/2$. Since the law of $Z_{\alpha_1}$ is continuous and has a strictly positive density on $(0,\infty)$, and since $Z_{\alpha_1} > 0$ almost surely, we have $0 < m_1 < 2r$ and $\PP(Z_{\alpha_1} \leq m_1) = 1/2$. Applying the strict Schur-convexity inequality \eqref{eq:Schur.special.case} from the proof of Lemma~\ref{lem:Schur.gamma} with $t = m_1$ gives
\[
\PP(Z_{\alpha_2} \leq m_1) > \PP(Z_{\alpha_1} \leq m_1) = \frac{1}{2}.
\]
Since the law of $Z_{\alpha_2}$ is continuous, this implies $\Med(Z_{\alpha_2}) < m_1 = \Med(Z_{\alpha_1})$. Therefore $\alpha\mapsto\Med(Z_{\alpha})$ is strictly increasing on $(0,1)$.

Finally, because $X_1$ and $X_2$ are independent and identically distributed, $Z_{\alpha} \, \smash{\stackrel{d}{=}} \, Z_{2 - \alpha}$. Hence $\Med(Z_{\alpha}) = \Med(Z_{2 - \alpha})$ for $0 < \alpha < 2$. The strict decrease on $(1,2)$ follows immediately from the strict increase on $(0,1)$ after replacing $\alpha$ by $2 - \alpha$.

\subsection{Proof of Theorem~\ref{prop:conj3.3}}\label{subsec:proof.conj3.3}

Let $a \leqdef \alpha - 1 > 1$ and $\tau \leqdef a^2 - 1 = \alpha(\alpha - 2)$. Since $\tau$ is strictly increasing as a function of $\alpha$ on $(2,\infty)$, it is enough to prove that the median is strictly decreasing as a function of $\tau\in(0,\infty)$. Let $G\sim \Gamma(r,1)$ and let $N\sim \mathcal{N}(0,1)$ be independent of $G$. We first note the distributional identity
\begin{equation}\label{eq:proof.conj3.3.normal.gamma.representation}
Z_{\alpha}\stackrel{d}{=}Y_{\tau} \leqdef 2G + \sqrt{2\tau G}N.
\end{equation}
To verify \eqref{eq:proof.conj3.3.normal.gamma.representation}, we note that, since $\alpha = 1 + \sqrt{1 + \tau}$ and $2-\alpha = 1-\sqrt{1 + \tau}$, we have $Z_{\alpha} = (1 + \sqrt{1 + \tau})X_1-(\sqrt{1 + \tau}-1)X_2$, where $X_1$ and $X_2$ are independent $\Gamma(r,1)$ random variables, and so by \cite[Eq.\ (20)]{FischerGauntSarantsev2025VGReview} it follows that $Z_{\alpha}\sim \mathrm{VG}(2r,1,\sqrt{\tau},0)$. By the normal variance-mean mixture representation of the VG distribution (see \cite[Eq.\ (18)]{FischerGauntSarantsev2025VGReview}) we obtain \eqref{eq:proof.conj3.3.normal.gamma.representation}.

Let $F_{\tau}$ denote the cdf of $Y_{\tau}$, and let $m(\tau) \leqdef \Med(Y_{\tau})$. The median is positive. To see this, we write $Z_{\alpha} = (1 + a)X_1 - (a - 1)X_2$ and use the beta-gamma decomposition. Then
\[
\PP(Z_{\alpha} \leq 0) = \PP\left(\frac{X_1}{X_1 + X_2} \leq \frac{a - 1}{2a}\right) < \frac{1}{2},
\]
because $X_1/(X_1 + X_2)\sim\operatorname{Beta}(r,r)$ is symmetric about $1/2$ and $(a - 1)/(2a) < 1/2$. Hence $m(\tau) > 0$.

Let $\Phi$ and $\phi$ denote the standard normal cdf and density, and let $g_k$ denote the density of the $\Gamma(k,1)$ distribution. For $q > 0$, conditioning on $G$ gives
\begin{equation}\label{gggg}
F_{\tau}(2q) = \int_0^{\infty} \Phi\left(\frac{2q - 2s}{\sqrt{2\tau s}}\right)g_r(s) \, \rd s.
\end{equation}
For $y > 0$, we define
\[
A_{\tau,q}(y) \leqdef \Phi(B_{\tau,q}(y)) - \frac{1}{2}, \qquad B_{\tau,q}(y) \leqdef 2\sqrt{\frac{2q}{\tau}}\sinh\left(\frac{y}{2}\right).
\]
We note that
\[
\frac{2q - 2qe^{-y}}{\sqrt{2\tau qe^{-y}}} = B_{\tau,q}(y), \qquad \frac{2q - 2qe^y}{\sqrt{2\tau qe^y}} = -B_{\tau,q}(y).
\]
Splitting the integral \eqref{gggg} at $s = q$ and making the changes of variables $s = qe^{-y}$ on $(0,q)$ and $s = qe^y$ on $(q,\infty)$ gives that
\[
\begin{aligned}
F_{\tau}(2q) &= \frac{q^r}{ \Gamma(r)}\int_0^{\infty} \Phi(B_{\tau,q}(y))e^{-r y - qe^{-y}} \, \rd y + \frac{q^r}{ \Gamma(r)}\int_0^{\infty} \Phi(-B_{\tau,q}(y))e^{r y - qe^y} \, \rd y \\
&= \frac{q^r}{ \Gamma(r)}\int_0^{\infty} \left(A_{\tau,q}(y) + \frac{1}{2}\right)e^{-r y - qe^{-y}} \, \rd y + \frac{q^r}{ \Gamma(r)}\int_0^{\infty} \left(\frac{1}{2} - A_{\tau,q}(y)\right)e^{r y - qe^y} \, \rd y,
\end{aligned}
\]
where we used $\Phi(-x) = 1 - \Phi(x)$ in the second equality. The terms containing $1/2$ add up to $1/2$, because
\[
\frac{q^r}{ \Gamma(r)}\int_0^{\infty} e^{-r y - qe^{-y}} \, \rd y + \frac{q^r}{ \Gamma(r)}\int_0^{\infty} e^{r y - qe^y} \, \rd y = \int_0^q g_r(s) \, \rd s + \int_q^{\infty} g_r(s) \, \rd s = 1.
\]
Consequently,
\begin{equation}\label{eq:proof.conj3.3.pairing.identity}
\begin{aligned}
F_{\tau}(2q) - \frac{1}{2}
&= \frac{q^r}{ \Gamma(r)}\int_0^{\infty} A_{\tau,q}(y)\left(e^{-r y - qe^{-y}} - e^{r y - qe^y}\right) \, \rd y \\
&= \frac{q^r}{ \Gamma(r)}\int_0^{\infty} A_{\tau,q}(y)\left(e^{-r y - q(\cosh(y) - \sinh(y))} - e^{r y - q(\cosh(y) + \sinh(y))}\right) \, \rd y \\
&= \frac{q^r}{ \Gamma(r)}\int_0^{\infty} A_{\tau,q}(y)e^{-q\cosh(y)}\left(e^{q\sinh(y) - r y} - e^{-q\sinh(y) + r y}\right) \, \rd y \\
&= \frac{2q^r}{ \Gamma(r)}\int_0^{\infty} A_{\tau,q}(y)e^{-q\cosh(y)}\sinh(q\sinh(y) - r y) \, \rd y.
\end{aligned}
\end{equation}
Since $A_{\tau,r}(y) > 0$ and $\sinh(y) > y$ for $y > 0$, we have $r(\sinh(y) - y) > 0$ and therefore $\sinh(r(\sinh(y) - y)) \geq r(\sinh(y) - y) > 0$. Thus \eqref{eq:proof.conj3.3.pairing.identity} with $q = r$ yields
\[
\begin{aligned}
F_{\tau}(2r)
&= \frac{1}{2} + \frac{2r^r}{ \Gamma(r)}\int_0^{\infty} A_{\tau,r}(y)e^{-r\cosh(y)}\sinh(r(\sinh(y) - y)) \, \rd y \\
&\geq \frac{1}{2} + \frac{2r^{r + 1}}{ \Gamma(r)}\int_0^{\infty} A_{\tau,r}(y)e^{-r\cosh(y)}(\sinh(y) - y) \, \rd y \\
&> \frac{1}{2}.
\end{aligned}
\]
Together with $F_{\tau}(0) < 1/2$, this proves
\[
0 < m(\tau) < 2r.
\]
Set
\[
q_{\tau} \leqdef \frac{m(\tau)}{2}.
\]
Then $0 < q_{\tau} < r$, and \eqref{eq:proof.conj3.3.pairing.identity} at $q = q_{\tau}$ gives
\begin{equation}\label{eq:proof.conj3.3.median.identity}
\int_0^{\infty} A_{\tau,q_{\tau}}(y)L_{q_{\tau}}(y) \, \rd y = 0,
\end{equation}
where
\[
L_q(y) \leqdef e^{-q\cosh(y)}\sinh(q\sinh(y) - r y).
\]

We next record the sign pattern of $L_q$ when $0 < q < r$. The function $y\mapsto q\sinh(y) - r y$ is negative for all sufficiently small positive $y$, tends to infinity as $y\to\infty$, and has exactly one zero in $(0,\infty)$. Indeed, its derivative is $q\cosh(y) - r$, which is strictly increasing on $(0,\infty)$ and changes sign exactly once. Therefore $L_q$ changes sign exactly once on $(0,\infty)$, and the change is from negative to positive.

Differentiating \eqref{eq:proof.conj3.3.pairing.identity} with respect to $\tau$ at fixed $q$ gives
\begin{equation}\label{eq:proof.conj3.3.tau.derivative}
\frac{\partial}{\partial\tau}F_{\tau}(2q) = -\frac{q^r}{\tau \Gamma(r)}\int_0^{\infty} B_{\tau,q}(y)\phi(B_{\tau,q}(y))L_q(y) \, \rd y.
\end{equation}
The differentiation is justified by dominated convergence locally uniformly for $\tau,q > 0$, because the factor $e^{-q\cosh(y)}$ gives exponential integrability as $y\to\infty$, while $A_{\tau,q}(y)$ and $B_{\tau,q}(y)\phi(B_{\tau,q}(y))$ are both of order $y$ as $y\downarrow0$.

We now compare the two weights $A_{\tau,q}$ and $B_{\tau,q}\phi(B_{\tau,q})$. Define
\[
\rho(x) \leqdef \frac{x\phi(x)}{\Phi(x) - 1/2}, \qquad x > 0.
\]
We claim that $\rho$ is strictly decreasing on $(0,\infty)$. Let $H(x) \leqdef \Phi(x) - 1/2$. Then
\[
\rho'(x) = \frac{\phi(x)}{H(x)^2}\left\{(1 - x^2)H(x) - x\phi(x)\right\}.
\]
To see that the expression in braces is negative, let $J(x) \leqdef x\phi(x) - (1 - x^2)H(x)$. Then $J(0) = 0$ and $J'(x) = 2xH(x) > 0$, $x > 0$. Thus $J(x) > 0$ for $x > 0$, proving that $\rho'(x) < 0$.

Since $y\mapsto B_{\tau,q_{\tau}}(y)$ is strictly increasing on $(0,\infty)$, the function $y\mapsto \rho(B_{\tau,q_{\tau}}(y))$ is strictly decreasing. Let $y_{\tau}$ be the unique positive zero of $L_{q_{\tau}}$. Recall that from \eqref{eq:proof.conj3.3.median.identity} it holds that $\int_0^{\infty} A_{\tau,q_{\tau}}(y)L_{q_{\tau}}(y) \, \rd y = 0$.
Using also that $B_{\tau,q_{\tau}}(y)\phi(B_{\tau,q_{\tau}}(y)) = \rho(B_{\tau,q_{\tau}}(y))A_{\tau,q_{\tau}}(y)$, we get
\[
\begin{aligned}
\int_0^{\infty} B_{\tau,q_{\tau}}(y)\phi(B_{\tau,q_{\tau}}(y))L_{q_{\tau}}(y) \, \rd y &= \int_0^{\infty} \rho(B_{\tau,q_{\tau}}(y))A_{\tau,q_{\tau}}(y)L_{q_{\tau}}(y) \, \rd y \\
&= \int_0^{\infty} \{\rho(B_{\tau,q_{\tau}}(y)) - \rho(B_{\tau,q_{\tau}}(y_{\tau}))\}A_{\tau,q_{\tau}}(y)L_{q_{\tau}}(y) \, \rd y \\
& < 0.
\end{aligned}
\]
The last inequality follows because $L_{q_{\tau}}(y) < 0$ on $(0,y_{\tau})$ and $L_{q_{\tau}}(y) > 0$ on $(y_{\tau},\infty)$, while $\rho(B_{\tau,q_{\tau}}(y)) - \rho(B_{\tau,q_{\tau}}(y_{\tau}))$ is positive on $(0,y_{\tau})$ and negative on $(y_{\tau},\infty)$. Therefore \eqref{eq:proof.conj3.3.tau.derivative} gives
\[
\left.\frac{\partial}{\partial\tau}F_{\tau}(t)\right|_{t = m(\tau)} > 0.
\]

Finally, $F_{\tau}$ is continuously differentiable in both variables $t > 0$ and $\tau > 0$, and its density is strictly positive at $m(\tau) > 0$. Hence the implicit function theorem applied to $F_{\tau}(m(\tau)) = 1/2$ yields
\[
m'(\tau) = -\frac{\left.\frac{\partial}{\partial\tau}F_{\tau}(t)\right|_{t = m(\tau)}}{\left.\frac{\partial}{\partial t}F_{\tau}(t)\right|_{t = m(\tau)}} < 0, \qquad \tau > 0.
\]
Thus $\tau\mapsto m(\tau)$ is strictly decreasing on $(0,\infty)$. Since $\tau = \alpha(\alpha - 2)$ is strictly increasing on $(2,\infty)$, the function $\alpha\mapsto \Med(Z_{\alpha})$ is strictly decreasing on $(2,\infty)$. This completes the proof.

\subsection{Proof of Corollary~\ref{cor:conj3.5}}\label{subsec:proof.conj3.5}

Let $s \leqdef r/2$ and $\kappa \leqdef \sigma^2/\theta^2$. By \eqref{eq:definitions.VG.gamma.representation}, $V_{r,\theta,\sigma}\stackrel{d}{=}\theta Z_{\alpha_{\sigma}}$ with $\alpha_{\sigma} \leqdef 1 + \sqrt{1 + \kappa}$, where $Z_{\alpha_{\sigma}}$ is a linear combination of two independent $\Gamma(s,1)$ random variables. Since $\sigma\mapsto\alpha_{\sigma}$ is strictly increasing from $(0,\infty)$ onto $(2,\infty)$, Theorem~\ref{prop:conj3.3} proves that $\sigma\mapsto\Med(V_{r,\theta,\sigma})$ is strictly decreasing. Letting $\sigma\downarrow0$ gives $\alpha_{\sigma}\downarrow2$. Also, $\theta Z_2 = 2\theta Y_1\sim \Gamma(r/2,(2\theta)^{-1})$. Since the cdf of $G$ is continuous and strictly increasing at its median, its quantile function is continuous at $1/2$. Therefore, the convergence of quantiles under weak convergence \cite[Lemma~21.2]{vdv98} gives
\[
\lim_{\sigma\downarrow0}\Med(V_{r,\theta,\sigma}) = \Med(G),
\]
and hence $\Med(V_{r,\theta,\sigma}) < \Med(G)$ for every $\sigma > 0$. This proves the claims \eqref{eq:cor:conj3.5.claim.1} and \eqref{eq:cor:conj3.5.claim.2}.

It remains to compute the limit as $\sigma\to\infty$. Let $G_s\sim \Gamma(s,1)$ and let $N\sim \mathcal{N}(0,1)$ be independent of $G_s$. By the representation in \eqref{eq:proof.conj3.3.normal.gamma.representation}, applied with shape parameter~$s$, the median of $Z_{\alpha_{\sigma}}$ is the median of
\[
Y_{\kappa} \leqdef 2G_s + \sqrt{2\kappa G_s}N.
\]
Let $F_{\kappa}$ be the cdf of $Y_{\kappa}$ and let $m_s(\kappa) \leqdef \Med(Y_{\kappa})$. The proof of Theorem~\ref{prop:conj3.3}, with $r$ replaced by $s$, gives $0 < m_s(\kappa) < 2s$ and the identity
\begin{equation}\label{eq:proof.conj3.5.pairing.identity}
F_{\kappa}(2q) - \frac{1}{2} = \frac{2q^s}{ \Gamma(s)}\int_0^{\infty} A_{\kappa,q}(y)e^{-q\cosh(y)}\sinh(q\sinh(y) - s y) \, \rd y,
\end{equation}
where
\begin{equation}\label{eq:A.def}
A_{\kappa,q}(y) \leqdef \Phi\left(2\sqrt{\frac{2q}{\kappa}}\sinh\left(\frac{y}{2}\right)\right) - \frac{1}{2}.
\end{equation}

Assume first that $s > 1/2$. For fixed $q > 0$ and $y > 0$, since $\Phi(x) = 1/2 + x/\sqrt{2\pi} + o(x)$ as $x\to0$, we have
\[
\sqrt{\kappa}A_{\kappa,q}(y)\to 2\sqrt{\frac{q}{\pi}}\sinh\left(\frac{y}{2}\right), \qquad \kappa\to\infty.
\]
Moreover, the convergence is dominated by an integrable function. Indeed, the elementary bound $\Phi(x) - 1/2 \leq x/\sqrt{2\pi}$, $x \geq 0$, gives
\begin{equation}\label{eq:A.bound}
0 \leq \sqrt{\kappa}A_{\kappa,q}(y) \leq 2\sqrt{\frac{q}{\pi}}\sinh\left(\frac{y}{2}\right), \qquad y > 0.
\end{equation}
Therefore the absolute value of the integrand in $\sqrt{\kappa}$ times the right-hand side of \eqref{eq:proof.conj3.5.pairing.identity} is bounded, up to a constant depending only on $q$ and $s$, by
\[
\sinh\left(\frac{y}{2}\right)e^{-q\cosh(y)}|\sinh(q\sinh(y) - s y)|.
\]
This function is integrable on $(0,\infty)$. Near the origin it is of order $y^2$. As $y\to\infty$, using $|\sinh(y)| \leq (e^y+e^{-y})/2$, $\cosh(y) - \sinh(y) = e^{-y}$ and $\cosh(y) + \sinh(y) = e^y$, we have
\[
\begin{aligned}
\sinh\left(\frac{y}{2}\right)e^{-q\cosh(y)}|\sinh(q\sinh(y) - s y)|
&\leq \frac{1}{2}\sinh\left(\frac{y}{2}\right)\left(e^{-q e^{-y} - s y} + e^{-q e^y + s y}\right) \\
&\leq C_{q,s}\left(e^{-(s - 1/2)y} + e^{-q e^y + (s + 1/2)y}\right),
\end{aligned}
\]
which is integrable precisely because $s > 1/2$. Hence dominated convergence applies in \eqref{eq:proof.conj3.5.pairing.identity}, and the sign of $F_{\kappa}(2q) - 1/2$ for all sufficiently large $\kappa$ is the sign of
\[
I_s(q) \leqdef \int_0^{\infty} \sinh\left(\frac{y}{2}\right)e^{-q\cosh(y)}\sinh(q\sinh(y) - s y) \, \rd y.
\]
Using $e^{-q\cosh(y)}\sinh(q\sinh(y) - s y) = (e^{-qe^{-y} - sy} - e^{-qe^y + sy})/2$, and then the changes of variables $t = e^{-y}$ and $t = e^y$ in the two terms, we obtain
\[
\begin{aligned}
I_s(q)
&= \frac{1}{4}\int_0^{\infty} (t^{s - 3/2} - t^{s - 1/2})e^{-qt} \, \rd t \\
&= \frac{1}{4}\left(\frac{ \Gamma(s - 1/2)}{q^{s - 1/2}} - \frac{ \Gamma(s + 1/2)}{q^{s + 1/2}}\right)
= \frac{ \Gamma(s - 1/2)}{4q^{s + 1/2}}\left(q - s + \frac{1}{2}\right).
\end{aligned}
\]
Therefore, if $q > s - 1/2$, then $F_{\kappa}(2q) > 1/2$ for all sufficiently large $\kappa$, while if $0 < q < s - 1/2$, then $F_{\kappa}(2q) < 1/2$ for all sufficiently large $\kappa$. Hence
\begin{equation}\label{eq:proof.conj3.5.limit.positive}
\lim_{\kappa\to\infty}m_s(\kappa) = 2s - 1, \qquad s > \frac{1}{2}.
\end{equation}

Now assume that $0 < s \leq 1/2$. Fix $q > 0$. We prove that $F_{\kappa}(2q) > 1/2$ for all sufficiently large $\kappa$. Let $L_q(y)$ be defined as in the proof of Theorem~\ref{prop:conj3.3}. Using $\cosh(y) - \sinh(y) = e^{-y}$ and $\cosh(y) + \sinh(y) = e^y$, we can rewrite $L_q(y)$ as
\[
\begin{aligned}
L_q(y)
&= \frac{1}{2}e^{-q\cosh(y)}\left(e^{q\sinh(y) - s y} - e^{-q\sinh(y) + s y}\right) \\
&= \frac{1}{2}\left(e^{-s y - qe^{-y}} - e^{s y - qe^y}\right)
= \frac{1}{2}e^{-s y - qe^{-y}}\left(1 - e^{2s y - q(e^y - e^{-y})}\right).
\end{aligned}
\]
Since $q(e^y - e^{-y}) - 2s y\to\infty$ as $y\to\infty$, there is $y_0 > 0$ such that $e^{2s y - q(e^y - e^{-y})} \leq 1/2$ for $y \geq y_0$. Hence, for $y \geq y_0$,
\[
L_q(y) \geq \frac{1}{4}e^{-s y - qe^{-y}} \geq \frac{e^{-q}}{4}e^{-s y}.
\]
Thus $L_q(y) > 0$ for $y \geq y_0$, and the lower bound holds with $c_1 \leqdef e^{-q}/4$. On the interval $(0,y_0)$, the function $\sinh(y/2)|L_q(y)|$ is integrable. Indeed, it is continuous on every interval $[\eta,y_0]$, $\eta > 0$, and as $y\downarrow0$ we have $\sinh(y/2) = O(y)$ and $q\sinh(y) - s y = (q - s)y + O(y^3)$,
so that $L_q(y) = O(y)$ as $y\downarrow0$. Thus $\sinh(y/2)|L_q(y)| = O(y^2)$ near the origin. By \eqref{eq:A.bound}, there is a constant $c_2 > 0$, depending on $q$, such that
\[
0 \leq A_{\kappa,q}(y) \leq c_2\kappa^{-1/2}\sinh\left(\frac{y}{2}\right), \qquad y > 0.
\]
Therefore the contribution to the integral in \eqref{eq:proof.conj3.5.pairing.identity} over the interval $(0,y_0)$ is bounded below by
\[
\int_0^{y_0}A_{\kappa,q}(y)L_q(y) \, \rd y \geq -c_2\kappa^{-1/2}\int_0^{y_0}\sinh\left(\frac{y}{2}\right)|L_q(y)| \, \rd y \geq -c_3\kappa^{-1/2},
\]
for some constant $c_3 > 0$. This estimate controls all possible negative contributions to the integral in \eqref{eq:proof.conj3.5.pairing.identity}, because we shall only use intervals contained in $[y_0,\infty)$ for the positive contribution.

Assume first that $0 < s < 1/2$. For $y\in[\log(\kappa),\log(\kappa) + 1]$ and $\kappa$ sufficiently large, this interval is contained in $[y_0,\infty)$, and
\[
2\sqrt{\frac{2q}{\kappa}}\sinh\left(\frac{y}{2}\right)
\geq 2\sqrt{\frac{2q}{\kappa}}\sinh\left(\frac{\log(\kappa)}{2}\right)
= 2\sqrt{\frac{2q}{\kappa}}\cdot\frac{\sqrt{\kappa} - 1/\sqrt{\kappa}}{2}
= \sqrt{2q}\left(1 - \frac{1}{\kappa}\right).
\]
Hence the argument of $\Phi$ in $A_{\kappa,q}$ (recall \eqref{eq:A.def}) is bounded below by a positive constant depending only on $q$, and so there exists $c_4 > 0$ such that
\[
A_{\kappa,q}(y) \geq c_4, \qquad \log(\kappa) \leq y \leq \log(\kappa) + 1.
\]
Using the lower bound $L_q(y) \geq c_1e^{-sy}$ on $[y_0,\infty)$, we obtain
\[
\int_{\log(\kappa)}^{\log(\kappa) + 1}A_{\kappa,q}(y)L_q(y) \, \rd y
\geq c_1c_4\int_{\log(\kappa)}^{\log(\kappa) + 1}e^{-sy} \, \rd y
= c_1c_4\kappa^{-s}\int_0^1e^{-su} \, \rd u
\geq c_5\kappa^{-s},
\]
for some constant $c_5 > 0$. Since $s < 1/2$, the term $\kappa^{-s}$ dominates $\kappa^{-1/2}$ as $\kappa\to\infty$. Combining this positive contribution with the lower bound $-c_3\kappa^{-1/2}$ from $(0,y_0)$ shows that the integral in \eqref{eq:proof.conj3.5.pairing.identity} is strictly positive for all sufficiently large $\kappa$.

It remains to treat the borderline case $s = 1/2$. For $\kappa$ sufficiently large, the compact interval $[y_0,(1/2)\log(\kappa)]$ is non-empty. On this interval, the argument of $\Phi$ in $A_{\kappa,q}$ is bounded above, and in fact tends to zero uniformly as $\kappa\to\infty$. Moreover, since $y \geq y_0$, there is a constant $c_6 > 0$ such that
\[
2\sqrt{\frac{2q}{\kappa}}\sinh\left(\frac{y}{2}\right) \geq c_6\kappa^{-1/2}e^{y/2}, \qquad y_0 \leq y \leq \frac{1}{2}\log(\kappa).
\]
Using the elementary lower bound $\Phi(x) - 1/2 \geq c_7x$ for $0 \leq x \leq C$ on a fixed bounded interval, we get, for all sufficiently large $\kappa$,
\[
A_{\kappa,q}(y) \geq c_8\kappa^{-1/2}e^{y/2}, \qquad y_0 \leq y \leq \frac{1}{2}\log(\kappa).
\]
Since $s = 1/2$, the lower bound $L_q(y) \geq c_1e^{-sy}$ becomes $L_q(y) \geq c_1e^{-y/2}$ on $[y_0,\infty)$. Therefore
\[
\begin{aligned}
\int_{y_0}^{(1/2)\log(\kappa)}A_{\kappa,q}(y)L_q(y) \, \rd y & \geq c_1c_8\kappa^{-1/2}\int_{y_0}^{(1/2)\log(\kappa)}e^{y/2}e^{-y/2} \, \rd y \\
&= c_1c_8\kappa^{-1/2}\left(\frac{1}{2}\log(\kappa) - y_0\right)
\geq c_9\kappa^{-1/2}\log(\kappa),
\end{aligned}
\]
for some constant $c_9 > 0$ and all sufficiently large $\kappa$. This dominates the possible negative contribution $c_3\kappa^{-1/2}$ from $(0,y_0)$. Hence, also when $s = 1/2$, the integral in \eqref{eq:proof.conj3.5.pairing.identity} is strictly positive for all sufficiently large $\kappa$.

Therefore, we have proved that, for every fixed $q > 0$ and every $0 < s \leq 1/2$, we have
$F_{\kappa}(2q) > 1/2$
for all sufficiently large $\kappa$. Thus $m_s(\kappa) \leq 2q$ for all sufficiently large $\kappa$. Since $q > 0$ was arbitrary and $m_s(\kappa) > 0$, it follows that
\begin{equation}\label{eq:proof.conj3.5.limit.zero}
\lim_{\kappa\to\infty}m_s(\kappa) = 0, \qquad 0 < s \leq \frac{1}{2}.
\end{equation}
Combining \eqref{eq:proof.conj3.5.limit.positive} and \eqref{eq:proof.conj3.5.limit.zero}, and multiplying by $\theta$, gives the asserted limit \eqref{eq:asserted.Med} for $\Med(V_{r,\theta,\sigma})$.

It remains to derive the displayed bounds \eqref{eq:results.VG.bounds.general} and \eqref{eq:results.VG.bounds.refined}. By \eqref{eq:asserted.Med} and the strict decrease of $\sigma\mapsto\Med(V_{r,\theta,\sigma})$, we have
\[
\Med(V_{r,\theta,\sigma}) > (r - 1)\theta, \qquad r > 1.
\]
Together with the positivity of $\Med(V_{r,\theta,\sigma})$ for all $r > 0$ we obtain the lower bound in \eqref{eq:results.VG.bounds.general}.
The upper bound in \eqref{eq:results.VG.bounds.general} follows from \eqref{eq:cor:conj3.5.claim.2} and the gamma median upper bound recorded in \eqref{twoside1} applied to $G\sim \Gamma(r/2,(2\theta)^{-1})$. Finally, assume that $r \geq 2$. Then the upper bound in \eqref{eq:results.VG.bounds.refined} follows from an application of the gamma median upper bound from \eqref{twoside2} applied to $G\sim \Gamma(r/2,(2\theta)^{-1})$.

\subsection{Proof of Corollary~\ref{cor:conj3.6}}\label{subsec:proof.conj3.6}

Let $\alpha_c \leqdef (c-1)/c\in (0,1)$. By the McKay Type I representation \eqref{eq:definitions.McKay.gamma.representation}, with $Y_1,Y_2$ independent $\Gamma(m + 1/2,1)$ random variables,
\[
Z_{m,c,\phi}
= \frac{\phi(c - 1)}{c}Y_1 + \frac{\phi(c + 1)}{c}Y_2
= \phi\left(\alpha_cY_1 + (2 - \alpha_c)Y_2\right)
\stackrel{d}{=} \phi Z_{\alpha_c}.
\]
Since $c\mapsto\alpha_c$ is strictly increasing from $(1,\infty)$ onto $(0,1)$, Theorem~\ref{prop:conj3.2} gives the strict increase of $c\mapsto\Med(Z_{m,c,\phi})$. Letting $c\downarrow1$ gives $\alpha_c\downarrow0$ and hence $\phi Z_{\alpha_c} \, \smash{\stackrel{d}{\longrightarrow}} \, 2\phi Y_2\sim \Gamma(m + 1/2,(2\phi)^{-1})$, while letting $c\to\infty$ gives $\alpha_c\uparrow1$ and hence $\phi Z_{\alpha_c} \, \smash{\stackrel{d}{\longrightarrow}} \, \phi(Y_1 + Y_2)\sim \Gamma(2m + 1,1/\phi)$. Since the cdfs of the two limiting gamma distributions are continuous and strictly increasing at their medians, their quantile functions are continuous at $1/2$. Therefore, the convergence of quantiles under weak convergence \cite[Lemma~21.2]{vdv98} gives
\[
\Med(G_1) < \Med(Z_{m,c,\phi}) < \Med(G_2), \qquad c > 1,
\]
where $G_1\sim \Gamma(m + 1/2,(2\phi)^{-1})$ and $G_2\sim \Gamma(2m + 1,1/\phi)$.

The general lower and upper bounds in \eqref{eq:results.McKay.bounds.general} follow from the gamma median bounds recorded in \eqref{twoside1}, applied to $G_1\sim \Gamma(m + 1/2,(2\phi)^{-1})$ and $G_2\sim \Gamma(2m + 1,1/\phi)$, respectively. The refined lower and upper bounds \eqref{eq:results.McKay.bounds.refined} for $m \geq 1/2$ similarly follow from the bounds for the median of the gamma distribution given in \eqref{twoside2} applied to $G_1\sim \Gamma(m + 1/2,(2\phi)^{-1})$ and $G_2\sim \Gamma(2m + 1,1/\phi)$, respectively.

\subsection{Proof of Theorem~\ref{prop:conj3.7}}\label{subsec:proof.conj3.7}

Assume first that $\delta > 0$. Set $\nu \leqdef \lambda - 1/2$, $a \leqdef \nu$ and $\xi_a \leqdef (a + \sqrt{a^2 + \delta^2\gamma^2})/\gamma^2$.
By the lower bound in \eqref{eq:Bessel.ratio.two.sided}, the condition \eqref{eq:GH.comparison.lower.condition} of Lemma~\ref{lem:GH.comparison} is satisfied with this choice of $a$. Hence, $b\mapsto \Psi_a(b) = \PP(b W + \sqrt{W} N \leq b \xi_a)$ is strictly decreasing on $(0,\infty)$. Since $\Psi_a(0) = 1/2$ and $\beta > 0$, substituting $b = \beta$ and using the mixture representation \eqref{eq:definitions.GH.mixture} gives
\[
\PP(X \leq \beta\xi_a) = \Psi_a(\beta) < \Psi_a(0) = \frac{1}{2}.
\]
Therefore $\Med(X) > \beta\xi_a$, which is the desired lower bound.

It remains only to consider the limiting variance-gamma case $\delta = 0$, for which the assumption is $\lambda > 1/2$. Using the parameter correspondence $r = 2\lambda$ and $\theta = \beta/\gamma^2$, the lower bound in \eqref{eq:results.VG.bounds.general} gives
\[
\Med(X) > (r - 1)\theta = \frac{\beta}{\gamma^2}(2\lambda - 1) = \frac{\beta}{\gamma^2}\left[\lambda - 1/2 + \sqrt{(\lambda - 1/2)^2}\right],
\]
which is \eqref{eq:results.conj3.7.bound} with $\delta = 0$.

\subsection{Proof of Theorem~\ref{prop:GH.upper.bounds}}\label{subsec:proof.GH.upper.bounds}

Set $\nu$ and $\xi_a$ as in the proof of Theorem \ref{prop:conj3.7}. We shall use without further mention that, for $\delta > 0$, the law of $X$ has a continuous strictly positive density on $\R$. Taking $a = \nu + 1 = \lambda + 1/2$ in Lemma~\ref{lem:GH.comparison}, the upper bound in \eqref{eq:Bessel.ratio.two.sided} shows that $b\mapsto \Psi_a(b) = \PP(b W + \sqrt{W} N \leq b \xi_a)$ is strictly increasing on $(0,\infty)$. Since $\Psi_a$ is continuous at zero, $\Psi_a(0) = 1/2$ and $\beta > 0$, substituting $b = \beta$ and using the mixture representation \eqref{eq:definitions.GH.mixture} gives
\[
\PP(X \leq \beta\xi_a) = \Psi_a(\beta) > \Psi_a(0) = \frac{1}{2},
\]
and therefore $\Med(X) < \beta\xi_a$. This is exactly \eqref{eq:results.GH.upper.global}, because $\beta\xi_a = u_{-1/2}$.

The assertions involving $u_0$ follow in the same way by taking $a = \nu + 1/2 = \lambda$ in Lemma~\ref{lem:GH.comparison} and using \eqref{eq:Bessel.ratio.half}. If $\lambda > 0$, then $\nu > -1/2$, so $b\mapsto \Psi_a(b)$ is strictly increasing. Since $\Psi_a$ is continuous at zero, substituting $b = \beta$ gives $\PP(X \leq u_0) = \Psi_a(\beta) > \Psi_a(0) = 1/2$, and hence $\Med(X) < u_0$. If $\lambda = 0$, then $\nu = -1/2$, equality holds in \eqref{eq:Bessel.ratio.half}, and $\Psi_a$ is constant. Consequently $\PP(X \leq u_0) = \Psi_a(\beta) = \Psi_a(0) = 1/2$, and hence $\Med(X) = u_0$. If $\lambda < 0$, then $\nu < -1/2$, the inequality in \eqref{eq:Bessel.ratio.half} is reversed, and $b\mapsto \Psi_a(b)$ is strictly decreasing. Since $\Psi_a$ is continuous at zero, substituting $b = \beta$ gives $\PP(X \leq u_0) = \Psi_a(\beta) < \Psi_a(0) = 1/2$, and hence $\Med(X) > u_0$.

It remains to justify the optimality assertions. Let $c_0 > -1/2$. We shall show that the upper bound $\Med(X) < u_{c_0}$ cannot hold uniformly over all $\lambda\in\R$ and $\delta > 0$. Choose $\lambda < -1/2$, set $\nu \leqdef \lambda - 1/2$ and $a_0 \leqdef \lambda - c_0$. Then $\nu < -1$ and $a_0 < 0$. Let $D \leqdef \delta\gamma$. By the small-argument expansion of the modified Bessel function of the second kind \cite[Eq.\ 10.30.2]{NISTDLMF} and the standard connection formula $K_{\lambda}(x) = K_{-\lambda}(x)$ \cite[Eq.\ 10.27.3]{NISTDLMF}, we have, as $D\downarrow0$,
\[
K_{\nu + 1}(D) = K_{-\nu-1}(D) \sim 2^{-\nu-2} \Gamma(-\nu - 1)D^{\nu + 1}, \qquad K_{\nu}(D) = K_{-\nu}(D) \sim 2^{-\nu-1} \Gamma(-\nu)D^{\nu},
\]
and, since $a_0 = \lambda - c_0 < 0$,
\[
a_0 + \sqrt{a_0^2 + D^2} = \frac{D^2}{\sqrt{a_0^2 + D^2} - a_0} \sim \frac{D^2}{-2a_0} = \frac{D^2}{2(c_0 - \lambda)}.
\]
Therefore, as $D\downarrow0$,
\[
\frac{K_{\nu + 1}(D)}{K_{\nu}(D)} = \frac{K_{-\nu-1}(D)}{K_{-\nu}(D)} \sim \frac{D}{-2\lambda - 1}, \qquad \frac{a_0 + \sqrt{a_0^2 + D^2}}{D} \sim \frac{D}{2(c_0 - \lambda)}.
\]
Since $c_0 > -1/2$, we have $2(c_0 - \lambda) > -2\lambda - 1$, and therefore
\[
\frac{a_0 + \sqrt{a_0^2 + D^2}}{D} < \frac{K_{\nu + 1}(D)}{K_{\nu}(D)}
\]
for all sufficiently small $D > 0$. Fix such a value of $D$, take $\gamma = 1$ and $\delta = D$, and then choose $\alpha = \sqrt{1 + \beta^2}$, so that $\sqrt{\alpha^2 - \beta^2} = \gamma$ for every $\beta > 0$. Equation \eqref{eq:GH.comparison.Psi.prime} from the proof of Lemma~\ref{lem:GH.comparison}, together with \eqref{eq:GH.comparison.algebra}, can also be used to evaluate the right derivative at $b = 0$ because $\delta > 0$. For the candidate bound with $c = c_0$, that is, with $a_0 \leqdef \lambda - c_0$, we obtain
\[
\Psi_{a_0}'(0 + ) = C_0\left[\frac{a_0 + \sqrt{a_0^2 + D^2}}{D} - \frac{K_{\nu + 1}(D)}{K_{\nu}(D)}\right],
\]
where $C_0$ is a positive constant. The preceding asymptotic comparison shows that the expression in square brackets is negative for all sufficiently small $D > 0$, and hence $\Psi_{a_0}'(0 + ) < 0$. Thus, for all sufficiently small $\beta > 0$,
\[
\PP(X \leq u_{c_0}) = \Psi_{a_0}(\beta) < \Psi_{a_0}(0) = \frac{1}{2}.
\]
For these parameter values the median is larger than $u_{c_0}$, so no $c_0 > -1/2$ can be valid uniformly over all $\lambda\in\R$ and $\delta > 0$.

Finally, let $c_0 > 0$ and fix $\lambda \geq 1$. Again, set $\nu = \lambda - 1/2$ and $a_0 = \lambda - c_0$. Let $D \leqdef \delta\gamma$. By the large-argument expansion of the modified Bessel function of the second kind \cite[Eq.\ 10.40.2]{NISTDLMF},
\[
K_{\lambda}(x) = \sqrt{\frac{\pi}{2x}}e^{-x}\left(1 + \frac{4\lambda^2 - 1}{8x} + O(x^{-2})\right), \qquad x\to\infty,
\]
and hence, as $D\to\infty$,
\[
\frac{K_{\nu + 1}(D)}{K_{\nu}(D)} = 1 + \frac{2\nu + 1}{2D} + O(D^{-2}) = 1 + \frac{\lambda}{D} + O(D^{-2}).
\]
Moreover, $\sqrt{a_0^2 + D^2} = D + a_0^2/(2D) + O(D^{-3})$, and therefore, as $D\to\infty$,
\[
\frac{a_0 + \sqrt{a_0^2 + D^2}}{D} = 1 + \frac{a_0}{D} + O(D^{-2}) = 1 + \frac{\lambda - c_0}{D} + O(D^{-2}).
\]
Since $c_0 > 0$, it follows that
\[
\frac{a_0 + \sqrt{a_0^2 + D^2}}{D} < \frac{K_{\nu + 1}(D)}{K_{\nu}(D)}
\]
for all sufficiently large $D > 0$. Fix such a value of $D$, take $\gamma = 1$ and $\delta = D$, and choose $\alpha = \sqrt{1 + \beta^2}$, so that $\sqrt{\alpha^2 - \beta^2} = \gamma$ for every $\beta > 0$. Equation \eqref{eq:GH.comparison.Psi.prime}, together with \eqref{eq:GH.comparison.algebra}, can also be used to evaluate the right derivative at $b = 0$ because $\delta > 0$. For the candidate bound with $c = c_0$, that is, with $a_0 \leqdef \lambda - c_0$, we obtain
\[
\Psi_{a_0}'(0 + ) = C_0\left[\frac{a_0 + \sqrt{a_0^2 + D^2}}{D} - \frac{K_{\nu + 1}(D)}{K_{\nu}(D)}\right],
\]
where $C_0$ is a positive constant. The preceding asymptotic comparison shows that the expression in square brackets is negative for all sufficiently large $D > 0$, and hence $\Psi_{a_0}'(0 + ) < 0$. Thus, for all sufficiently small $\beta > 0$,
\[
\PP(X \leq u_{c_0}) = \Psi_{a_0}(\beta) < \Psi_{a_0}(0) = \frac{1}{2}.
\]
For these parameter values the median is larger than $u_{c_0}$, so no $c_0 > 0$ can be valid uniformly over the range $\lambda \geq 1$ and $\delta > 0$. This proves that $c = 0$ is the largest uniform constant on the range $\lambda \geq 1$, and completes the proof.

\subsection{Proof of Corollary~\ref{mmm}}

\noindent 1. Corollary 2.6 of \cite{MR4141494} asserts that $\mathrm{Mode}(V_{r,\theta,\sigma}) = 0$ for $0 < r \leq 2$ and $\mathrm{Mode}(V_{r,\theta,\sigma}) < \theta(r - 2)$ for $r > 2$, whilst it is standard that $\EE[V_{r,\theta,\sigma}] = r\theta$ (see, for example, \cite[Eq.\ (30)]{FischerGauntSarantsev2025VGReview}). Combining these facts with the two-sided inequality \eqref{eq:results.VG.bounds.general} for $\Med(V_{r,\theta,\sigma})$ now yields inequality \eqref{mmm1}.

\vspace{3mm}

\noindent 2. Theorem 2.9 of \cite{MR4141494} states that $\mathrm{Mode}(Z_{m,c,b}) = 0$ for $-1/2 < m \leq 0$ and $\mathrm{Mode}(Z_{m,c,b}) < 2m\phi$ for $m > 0$, where $\phi = bc/(c^2 - 1)$, whilst the mean is given by $\EE[Z_{m,c,b}] = (2m + 1)\phi$ (see \cite{m32}). The inequality $\Med(Z_{m,c,b}) < \EE[Z_{m,c,b}]$ now follows from the upper bound of \eqref{eq:results.McKay.bounds.general}. Since $Z_{m,c,b} > 0$ almost surely, we have $\Med(Z_{m,c,b}) > 0$, and therefore $\mathrm{Mode}(Z_{m,c,b}) = 0 < \Med(Z_{m,c,b})$ for $-1/2 < m \leq 0$. Also, the inequality $\mathrm{Mode}(Z_{m,c,b}) < \Med(Z_{m,c,b})$ follows, for $m \geq 1/2$, from the lower bound in \eqref{eq:results.McKay.bounds.refined}.

To complete the proof, it remains to prove that $\mathrm{Mode}(Z_{m,c,b}) < \Med(Z_{m,c,b})$ for $0 < m < 1/2$. By the bound $(2m + 1)\phi e^{-2\log(2)/(2m + 1)} < \Med(Z_{m,c,b})$ from \eqref{eq:results.McKay.bounds.general} and the aforementioned mode bound, it is enough to prove that $(2m + 1) e^{-2\log(2)/(2m + 1)} > 2m$ for $0 < m < 1/2$. Letting
\[
f(m) \leqdef \log\bigg(\frac{2m + 1}{2m}\bigg)-\frac{2\log(2)}{2m + 1},
\]
the desired inequality is simply $f(m) > 0$ for $0 < m < 1/2$. We have that
\[
f'(m) = \frac{4m\log(2)-(2m + 1)}{m(2m + 1)^2} < \frac{2m-1}{m(2m + 1)^2} < 0, \qquad 0 < m < 1/2.
\]
Since $\lim_{m\uparrow1/2}f(m) = 0$, it follows that $f(m) > 0$ for all $0 < m < 1/2$. This completes the proof of inequality \eqref{mmm2}.

\vspace{3mm}

\noindent 3. Let $X\sim \mathrm{GH}(\lambda,\alpha,\beta,\delta,0)$. By \cite[Eq.\ (2.12)]{MR4141494} we have that $\mathrm{Mode}(X) < (\beta/\gamma^2)[\lambda-1/2 + \sqrt{(\lambda-1/2)^2 + \delta^2\gamma^2}]$ for $\lambda\in\R$ and $\delta > 0$. Combining this bound with inequality \eqref{eq:results.conj3.7.bound} yields inequality \eqref{mmm3}. The mean of the GH distribution is given by
\begin{equation}\label{mmean}\EE[X] = \frac{\delta\beta}{\gamma} \frac{K_{\lambda + 1}(\delta\gamma)}{K_{\lambda}(\delta\gamma)}
\end{equation}
(see \cite{bb80}). By \cite[Eq. (2.23)]{MR4141494} we have the bound
\[
\EE[X] > \frac{\beta}{\gamma^2}\Big[\lambda + \sqrt{\lambda^2 + \delta^2\gamma^2}\Big], \qquad \lambda\in\R,
\]
and combining this bound with inequality \eqref{eq:results.GH.upper.zero} yields inequality \eqref{mmm4} for $\lambda \geq 0$. The restriction that $\lambda \geq 0$ is a result of the fact that the upper bound for $\Med(X)$ in inequality \eqref{eq:results.GH.upper.zero} is only valid for $\lambda \geq 0$. We also note that an application of inequality \eqref{eq:Bessel.ratio.half} to equation \eqref{mmean} yields the bound
\[
\EE[X] \geq \frac{\beta}{\gamma^2}\Big[\lambda + 1/2 + \sqrt{(\lambda + 1/2)^2 + \delta^2\gamma^2}\Big], \qquad \lambda \leq -1/2.
\]
From this bound and inequality \eqref{eq:results.GH.upper.global} it follows that inequality \eqref{mmm4} also holds for $\lambda \leq -1/2$.

It remains to consider the case $-1/2 < \lambda < 0$. Set $D \leqdef \delta\gamma$ and
\[
\xi \leqdef \frac{\delta}{\gamma}\frac{K_{\lambda + 1}(D)}{K_{\lambda}(D)} = \frac{\EE[X]}{\beta}, \qquad a \leqdef \frac{\gamma^2\xi^2 - \delta^2}{2\xi}.
\]
Since $\lambda>-1/2$, we have that $K_{\lambda + 1}(D) > K_{\lambda}(D)$ (see \cite[Eq.\ (3.1)]{is91}). Consequently, $\xi > \delta/\gamma$ and $a > 0$. By the definition of $a$, $\gamma^2\xi^2 - 2a\xi - \delta^2 = 0$, and therefore, on taking the positive root,
\[
\xi = \frac{a + \sqrt{a^2 + D^2}}{\gamma^2} = \xi_a.
\]
Now set $\nu \leqdef \lambda - 1/2\in(-1,-1/2)$. For every $x > 0$,
\[
\frac{K_{\nu + 1}(x)}{K_{\nu}(x)} < 1 < \frac{a + \sqrt{a^2 + x^2}}{x},
\]
where the first inequality is due to \cite[Eq. (3.2)]{is91}, since $\nu<-1/2$.
Thus the reversed inequality in condition \eqref{eq:GH.comparison.lower.condition} of Lemma~\ref{lem:GH.comparison} holds for this choice of $a$, and so $b\mapsto\Psi_a(b)$ is strictly increasing on $(0,\infty)$. Since $\Psi_a$ is continuous at zero and $\beta > 0$, it follows that $\Psi_a(\beta) > \Psi_a(0) = 1/2$. Since $\xi = \xi_a$, the normal variance-mean mixture representation \eqref{eq:definitions.GH.mixture} and equation \eqref{mmean} give
\[
\PP(X \leq \EE[X]) = \PP(\beta W + \sqrt{W}N \leq \beta\xi_a) = \Psi_a(\beta) > \frac{1}{2}.
\]
The law of $X$ has a continuous strictly positive density, and therefore $\Med(X) < \EE[X]$. This proves inequality \eqref{mmm4} for $-1/2 < \lambda < 0$, and hence for all $\lambda\in\R$. Combining inequalities \eqref{mmm3} and \eqref{mmm4} proves \eqref{mmm5}. This completes the proof.

\section*{Funding}
\addcontentsline{toc}{section}{Funding}

Robert E.~Gaunt is funded by EPSRC grant EP/Y008650/1. Fr\'ed\'eric Ouimet is supported by the Natural Sciences and Engineering Research Council of Canada (NSERC) through Discovery Grant RGPIN-2026-04471 and Discovery Launch Supplement DGECR-2026-00449.

\section*{References}

\setlength{\bibsep}{0pt plus 0ex}


\begin{thebibliography}{99}
\addcontentsline{toc}{section}{References}

\bibitem{alm03} Alm, S. E. Monotonicity of the difference between median and mean of gamma distributions and of a related Ramanujan sequence. \emph{Bernoulli} $\mathbf{9}$ (2003), 351--371.

\bibitem{a05} Alzer, H. Proof of the Chen--Rubin conjecture. \emph{Proc. R. Soc. Edinb. A: Math.} $\mathbf{135}$ (2005), 677--688.

\bibitem{b26} Baricz, \'{A}., Prabhu, D. K., Singh, S. and Vijesh, V. A. Infinitely divisible modified Bessel distributions. \emph{Pac. J. Math.} $\mathbf{343}$ (2026), 261--313.

\bibitem{b77} Barndorff--Nielsen, O. E. Exponentially decreasing distributions for the logarithm of particle size. \emph{Proc. R. Soc. Lond. Ser. A} $\mathbf{353}$ (1977), 401--419.

\bibitem{bb80} Barndorff-Nielsen, O. E. and Bl\ae sild, P. \emph{Hyperbolic distributions and ramifications: Contributions to theory and application.} Research Report 68. Department of Theoretical Statistics, Institute of Mathematics, University of Aarhus, 1980.

\bibitem{bb85} Barndorff-Nielsen, O. E., Bl\ae sild, P., Jensen, J. L. and Sørensen, M. The fascination of sand. In Atkinson, A. C. and Fienberg, S. E. (eds), \emph{A Celebration of Statistics}, pp.\ 57--87. New York: Springer-Verlag, 1985.

\bibitem{bjs89} Barndorff-Nielsen, O. E., Jensen, J. L. and Sørensen, M. Wind shear and hyperbolic distributions. \emph{Bound.-Layer Meteorol.} $\mathbf{49}$ (1989), 417--431.

\bibitem{bks82} Barndorff-Nielsen, O. E., Kent, J. and Sørensen, M. Normal variance-mean mixtures and $z$ distributions. \emph{Int. Stat. Rev.} $\mathbf{50}$ (1982), 145--159.

\bibitem{bp06} Berg, C. and Pedersen, H. L. The Chen-Rubin conjecture in a continuous setting. \emph{Methods Appl. Anal.} $\mathbf{13}$ (2006), 63--88.

\bibitem{bs03} Bibby, B. M. and Sørensen, M. Hyperbolic processes in finance. In: Rachev, S. T. (Ed.), \emph{Handbook of Heavy Tailed Distributions in Finance}, Elsevier Science, Amsterdam, 2003, 211--248.

\bibitem{BockDiaconisHufferPerlman1987} Bock, M. E., Diaconis, P., Huffer, F. W. and Perlman, M. D. Inequalities for linear combinations of gamma random variables. \emph{Can. J. Stat.} $\mathbf{15}$ (1987), 387--395.

\bibitem{cr86} Chen, J. and Rubin, H. Bounds for the difference between median and mean of gamma and Poisson distributions. \emph{Stat. Probab. Lett.} $\mathbf{4}$ (1986), 281--283.

\bibitem{MR1195477} Choi, K. P. On the medians of gamma distributions and an equation of Ramanujan. \emph{Proc. Am. Math. Soc.} $\mathbf{121}$ (1994), 245--251.

\bibitem{craig} Craig, C. C. On the Frequency Function of $xy$. \emph{Ann. Math. Stat.} $\mathbf{7}$ (1936), 1--15.

\bibitem{d98} Dufresne, D. Algebraic properties of beta and gamma distributions, and applications. \emph{Adv. Appl. Math.} $\mathbf{20}$ (1998), 285--299.

\bibitem{eh04} Eberlein, E. and von Hammerstein, E. A. Generalized hyperbolic and inverse Gaussian distributions: limiting cases and approximation of processes, in: Dalang, R. C., Dozzi, M. and Russo, F. (Eds.), \emph{Seminar on Stochastic Analysis, Random Fields and Applications IV}, in: \emph{Prog. Probab.}, vol. 58, Birkh\"auser, Basel, 2004, 221--264.

\bibitem{ek95} Eberlein, E. and Keller, U. Hyperbolic distributions in finance. \emph{Bernoulli} $\mathbf{1}$ (1995), 281--299.

\bibitem{ep02} Eberlein, E. and Prause, K. The generalized hyperbolic model: financial derivatives and risk measures. In Geman, H., Madan, D., Pliska, S. R. and Vorst, T. (eds), \emph{Mathematical Finance--Bachelier Congress 2000: Selected Papers from the First World Congress of the Bachelier Finance Society, Paris, June 29--July 1, 2000}, 245--267. Berlin, Heidelberg: Springer, 2002.

\bibitem{FischerGauntSarantsev2025VGReview} Fischer, A., Gaunt, R. E. and Sarantsev, A. The Variance-Gamma Distribution: A Review. \emph{Statist. Sci.} $\mathbf{40}$ (2025), 235--258.

\bibitem{g14} Gaunt, R. E. Variance-Gamma approximation via Stein's method. \emph{Electron. J. Probab.} $\mathbf{19}$ (2014), no.\ 38, 1--33.

\bibitem{gaunt prod} Gaunt, R. E. A note on the distribution of the product of zero mean correlated normal random variables. \emph{Stat. Neerl.} $\mathbf{73}$ (2019), 176--179.

\bibitem{g22} Gaunt, R. E. The basic distributional theory for the product of zero mean correlated normal random variables. \emph{Stat. Neerl.} $\mathbf{76}$ (2022), 450--470.

\bibitem{MR4141494} Gaunt, R. E. and Merkle, M. On bounds for the mode and median of the generalized hyperbolic and related distributions. \emph{J. Math. Anal. Appl.} $\mathbf{493}$ (2021), Art.\ 124508.

\bibitem{pog1} G\'orska, K., Horzela, A., Jankov Ma\v{s}irevi\'c, D. and Pog\'any, T. K. Observations on the McKay $I_\nu$ Bessel distribution. \emph{J. Math. Anal. Appl.} $\mathbf{516}$ (2022), Art.\ 126481.

\bibitem{gm77} Groeneveld, R. A. and Meeden, G. The Mode, Median, and Mean Inequality. \emph{Am. Stat.} $\mathbf{31}$ (1977), 120--121.

\bibitem{HolmAlouini2004} Holm, H. and Alouini, M.-S. Sum and difference of two squared correlated Nakagami variates in connection with the McKay distribution. \emph{IEEE Trans. Commun.} $\mathbf{52}$ (2004), 1367--1376.

\bibitem{is91} Ifantis, E. K. and Siafarikas, P. D. Bounds for modified Bessel functions. \emph{Rend. Circ. Mat. Palermo} $\mathbf{40}$ (1991), 347--356.

\bibitem{pog2} Jankov Ma\v{s}irevi\'c, D., Pog\'any, T. K. and Uji\'c, N. Observations on the McKay $I_\nu$ Bessel distribution II. \emph{J. Math. Anal. Appl.} $\mathbf{550}$ (2025), Art.\ 129569.

\bibitem{kkp01} Kotz, S., Kozubowski, T. J. and Podg\'{o}rski, K. \emph{The Laplace Distribution and Generalizations: A Revisit with Applications to Communications, Economics, Engineering, and Finance.} Birkh\"auser, Boston, 2001.

\bibitem{ln10} Laforgia, A. and Natalini, P. Some inequalities for modified Bessel functions. \emph{J. Inequal. Appl.} (2010), Art.\ ID 253035, 10 pp.

\bibitem{Lyon2021} Lyon, R. F. On closed-form tight bounds and approximations for the median of a gamma distribution. \emph{PLoS ONE} $\mathbf{16}$ (2021), e0251626.

\bibitem{Lyon2023} Lyon, R. F. Tight bounds for the median of a gamma distribution. \emph{PLoS ONE} $\mathbf{18}$ (2023), e0288601.

\bibitem{mcc98} Madan, D. B., Carr, P. P. and Chang, E. C. The variance gamma process and option pricing. \emph{Eur. Finance Rev.} $\mathbf{2}$ (1998), 79--105.

\bibitem{ms90} Madan, D. B. and Seneta, E. The variance gamma (V.G.) model for share market returns. \emph{J. Bus.} $\mathbf{63}$ (1990), 511--524.

\bibitem{mo09} Marshall, A. W., Olkin, I. and Arnold, B. C. \emph{Inequalities: Theory of Majorization and Its Applications}, second edition, Springer, New York, 2011.

\bibitem{m32} McKay, A. T. A Bessel function distribution. \emph{Biometrika} $\mathbf{24}$ (1932), 39--44.

\bibitem{mfe05} McNeil, A. J., Frey, R. and Embrechts, P. \emph{Quantitative Risk Management: Concepts, Techniques, and Tools.} Princeton University Press, Princeton, 2005.

\bibitem{np16} Nadarajah, S. and Pog\'{a}ny, T. K. On the distribution of the product of correlated normal random variables. \emph{C. R. Acad. Sci. Paris, Ser. I} $\mathbf{354}$ (2016), 201--204.

\bibitem{NISTDLMF} Olver, F. W. J., Lozier, D. W., Boisvert, R. F. and Clark, C. W., eds. \emph{NIST Handbook of Mathematical Functions}. Cambridge University Press, New York, 2010.

\bibitem{rs16} Ruiz-Antol\'in, D. and Segura, J. A new type of sharp bounds for ratios of modified Bessel functions. \emph{J. Math. Anal. Appl.} $\mathbf{443}$ (2016), 1232--1246.

\bibitem{s11} Segura, J. Bounds for ratios of modified Bessel functions and associated Tur\'an-type inequalities. \emph{J. Math. Anal. Appl.} $\mathbf{374}$ (2011), 516--528.

\bibitem{s23} Segura, J. Simple bounds with best possible accuracy for ratios of modified Bessel functions. \emph{J. Math. Anal. Appl.} $\mathbf{526}$ (2023), Art.\ 127211.

\bibitem{vdv98} van der Vaart, A. W. \emph{Asymptotic Statistics}. Cambridge University Press, Cambridge, 1998.

\bibitem{v78} van Zwet, W. R. Mean, median, mode II. \emph{Stat. Neerl.} $\mathbf{33}$ (1979), 1--5.

\bibitem{wb32} Wishart, J. and Bartlett, M. S. The distribution of second order moment statistics in a normal system. \emph{Proc. Cambridge Philos. Soc.} $\mathbf{28}$ (1932), 455--459.

\end{thebibliography}
\end{document}